\newtheorem{theorem}{Theorem}[section]
\newtheorem{lemma}{Lemma}[section]
\newtheorem{corollary}{Corollary}[section]
\theoremstyle{remark}
\newtheorem{remark}{Remark}
\numberwithin{equation}{section}
\newcommand{\lb}{\left}
\newcommand{\rb}{\right}
\begin{document}
\title[On some $P$-$Q$ mixed modular equations of degree 5]{On some $P$-$Q$ mixed modular equations of degree 5}
\author[M. S. Mahadeva Naika, S. Chandankumar and M. Harish]{M. S. Mahadeva Naika, S. Chandankumar and M. Harish }
\address[M. S. Mahadeva Naika]{Department of Mathematics, Bangalore University,  Central College
Campus, Bangalore-560 001, Karnataka, India}\email{msmnaika@rediffmail.com}
\address[S. Chandankumar]{Department of Mathematics, M. S. Ramaiah University of Applied Sciences, Peenya Campus, Peenya 4th Phase, Bengaluru-560 058, Karnataka, India}\email{{chandan.s17@gmail.com}}
\address[M. Harish]{Department of Mathematics, R. V. College of Engineering, Mysore Road, R. V. Vidyanikethan, Bengaluru, Karnataka 560 059, India}\email{{numharish@gmail.com}}
\subjclass[2000]{33D10, 11A55, 11F27}
\keywords{Modular equations, Theta-functions, Remarkable product of theta-functions}
\begin{abstract}
In his second notebook, Ramanujan recorded total of 23 $P$-$Q$ modular equations involving theta-functions $\varphi(q)$, $\psi(q)$ and $f(-q)$. In this paper, modular equations analogous to those recorded by Ramanujan are obtained involving $f(-q)$. As a consequence, several values of quotients of theta-function are evaluated. \end{abstract}
\maketitle
\begin{center}
	{\bf Dedicated to Prof. C. Adiga on the occasion of his 62$^{nd}$ Birthday}
\end{center}
\section{Introduction}
For $|q|<1,$ let $(a;q)_\infty$
denote the infinite product $\displaystyle \prod_{n=0}^\infty(1-aq^{n})$, where $a$, $q $ are complex numbers and $f(a,b)$ be the Ramanujan theta-function:
\begin{equation*}
f(a,b):=\sum_{n=-\infty}^{\infty}a^{n(n+1)/2} b^{n(n-1)/2},\,\,\,|ab|<1,
\end{equation*}
The following definitions of theta-functions  $\varphi$, $\psi$ and $f$ follows as special cases of $f(a,b):$
\begin{align}
\varphi(q)&:=f(q,q)=\sum_{n=-\infty}^{\infty}q^{n^2},\\
 \psi(q)&:=f(q,q^3) =\sum_{n=0}^{\infty}q^{n(n+1)/2},\\
 f(-q)&:=f(-q,-q^2)=\sum_{n=-\infty}^{\infty}(-1)^n q^{n(3n-1)/2}.
\end{align}
The ordinary or Gaussian hypergeometric function is defined by
$$_2F_1(a,b;c;z):=\sum_{n=0}^{\infty}\frac{\lb(a\rb)_n \lb(b\rb)_n}{\lb(c\rb)_n n!}z^n,\ \ \ 0\leq|z|<1,$$
where $a$, $b$, $c$ are complex numbers, $c\neq0,-1,-2,\ldots$, and $$(a)_0=1,\ \ (a)_n=a(a+1)\cdots(a+n-1)\ \ \textrm{for any positive integer }n.$$
Let $K(k)$ be the complete elliptic integral of the  first kind of modulus $k$. Recall that
\begin{equation}\label{ee11}
K(k):=\int_0^{\frac{\pi}{2}}\frac{d\phi}{\sqrt{1-k^2\sin^2\phi}}
=\frac{\pi}{2}\sum_{n=0}^{\infty}\frac{\left(\frac{1}{2}\right)^2_n}{\left(n!\right)^2}k^{2n}
=\frac{\pi}{2}\varphi^2(q),\,\,\,\,\,(0<k<1),
\end{equation}
and set $K'=K(k')$, where $k'=\sqrt{1-k^2}$  is the so called complementary modulus of $k$. It is classical to set $q(k)=e^{-\pi K(k')/K(k)}$ so that $q$ is one-to-one increases from 0 to 1.

\noindent In the same manner introduce $L_1=K(\ell_1), {L'_1}=K(\ell'_1) $ and suppose that the following equality
\begin{equation}\label{ee12}
n_1\frac{K'}{K}=\frac{L'_1}{L_1}
\end{equation}
holds for some positive integer $n_1$. Then a modular equation of degree $n_1$ is a relation between the moduli $k$ and $\ell_1$ which is induced by \eqref{ee12}.  Following Ramanujan, set $\alpha=k^2$ and $\beta=\ell_1^2$. We say that $\beta$ is of degree $n_1$ over $\alpha$. The multiplier $m$, corresponding to the degree $n_{1}$, is defined by
\begin{equation}\label{a1}
m=\frac{K}{L_1}=\frac{\varphi^2(q)}{\varphi^2(q^{n_1})},
\end{equation}
for $q=e^{-\pi K(k')/K(k)}$.

\noindent Let $K$, $K'$, $L_1$, $L_1'$, $L_2$, $L_2'$, $L_3$ and $L_3'$ denote complete elliptic integrals of the first kind corresponding, in pairs, to the moduli $\sqrt{\alpha}$, $\sqrt{\beta}$, $\sqrt{\gamma}$ and $\sqrt{\delta}$, and their complementary moduli, respectively.  Let $n_1$, $n_2$ and $n_3$ be positive integers such that $n_3=n_1n_2$.  Suppose that the equalities
\begin{equation}\label{ee14}
n_1\frac{K'}{K}=\frac{L_1'}{L_1},\ \ n_2\frac{K'}{K}=\frac{L_2'}{L_2} \ \ \textrm {and}\ \ n_3\frac{K'}{K}=\frac{L_3'}{L_3},
\end{equation}
hold.  Then a ``mixed'' modular equation is a relation between the moduli $\sqrt{\alpha}$, $\sqrt{\beta}$, $\sqrt{\gamma}$ and $\sqrt{\delta}$ that is induced by \eqref{ee14}.  We say that $\beta$, $\gamma$ and $\delta$ are of degrees $n_1$, $n_2$ and $n_3$, respectively over $\alpha$.  The multipliers $m=K/L_1$ and $m'=L_2/L_3$ are algebraic relation involving $\alpha$, $\beta$, $\gamma$ and $\delta$.

 At scattered places of his second notebook \cite{SR2}, Ramanujan recorded a total of nine $P$--$Q$ ``mixed'' modular relations of degrees 1, 3, 5 and 15. These relations were proved by B. C. Berndt and L. -C. Zhang \cite{BCBLCZ1}, \cite{BCBLCZ2} and the same has been reproduced in the book by Berndt \cite[pp. 214-235]{BCB2}. In \cite{BAM1}, S. Bhargava, C. Adiga and M. S. Mahadeva Naika have established several new $P$--$Q$ ``mixed'' modular relations with four moduli. For more information one can see \cite{MSMCKSBH} and \cite{MSMCKSBH1}. Motivated by all these works, we establish some new modular equations of ``mixed'' degrees and as an application, we establish some new general formulas for the explicit evaluations of a remarkable product of theta function.

 In Section \ref{sec2}, we collect some identities which are useful in proofs of our main results. In Section \ref{sec3}, we establish several new modular equations of degree 5. In Section \ref{sec4}, we establish several new $P$--$Q$ ``mixed'' modular equations akin to those recorded by Ramanujan in his notebooks.

 Mahadeva Naika, M. C. Maheshkumar and Bairy \cite{MSMMCMKSB}, have defined a new remarkable product of theta-functions $b_{s,t}$:
\begin{equation}\label{bmn}
b_{s,\,\,t}= \frac{ t e^{ \frac{-(t-1)\pi}{4}
\sqrt{\frac{s}{t}}}\psi^2\left(-e^{-\pi\sqrt{st}}\right)
\varphi^2\left(-e^{-2\pi\sqrt{st}}\right)}{\psi^2\left(-e^{-\pi\sqrt{\frac{s}{t}}}\right)
\varphi^2\left(-e^{-2\pi\sqrt{\frac{s}{t}}}\right)},
\end{equation}
 where $s$, $t$ are real numbers such that $s>0$ and  $t\geq1.$ They have established some new general formulas for the explicit evaluations of $b_{s,t}$ and computed some particular values of $b_{s,t}$. In Section \ref{sec5}, we establish some new modular relations connecting a remarkable product of theta-functions $b_{s,5}$ with $b_{r^2s,5}$ for $r=$ 2, 4 and 6 and explicit values of $b_{s,5}$ are deduced.

\section{Preliminary results}\label{sec2}
In this section, we list some of the relevant identities which are useful in the proofs of our results.
\begin{lemma}\cite[Ch. 17, Entry 12 (i) and (iii), p. 124]{BCB1}
For\, $0<x<1$, let
\begin{eqnarray}
&&f(e^{-y})=\sqrt z2^{-1/6}\{x(1-x)e^y\}^{1/24},\label{b1}\\&&
f(e^{-2y})=\sqrt z2^{-1/3}\{x(1-x)e^y\}^{1/12},\label{b2}
\end{eqnarray}\end{lemma}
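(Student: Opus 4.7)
Since this lemma is taken verbatim from Berndt's rendering of Entries 12(i) and 12(iii) in Chapter 17 of Ramanujan's Notebooks, I would follow the classical Ramanujan--Berndt argument that identifies $f(-q^{n})$ with the hypergeometric quantity $z=\varphi^{2}(q)$ and the modulus $x=k^{2}$. The ingredients are all drawn from Jacobi's triple product and the theta-function identities in Entry 24 of Chapter 16 of \cite{BCB1}: first, the product relations
\[
\varphi(-q)=\frac{f^{2}(-q)}{f(-q^{2})},\qquad \psi(q)=\frac{f^{2}(-q^{2})}{f(-q)},\qquad \varphi(q)\psi(q^{2})=\psi^{2}(q),
\]
and second, the classical ``$k$''-identities
\[
1-x=\frac{\varphi^{4}(-q)}{\varphi^{4}(q)},\qquad x=\frac{16q\,\psi^{4}(q^{2})}{\varphi^{4}(q)},
\]
which are just $k'^{2}=(\theta_{4}/\theta_{3})^{4}$ and $k=\theta_{2}^{2}/\theta_{3}^{2}=2q^{1/2}\psi^{2}(q^{2})/\varphi^{2}(q)$.

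From $\varphi(q)=\sqrt{z}$ and the two $k$-identities I can immediately read off $\varphi(-q)=\sqrt{z}\,(1-x)^{1/4}$ and $\psi(q^{2})=\sqrt{z}\,\{x/(16q)\}^{1/4}$, and then $\psi^{2}(q)=\varphi(q)\psi(q^{2})=z\,\{x/(16q)\}^{1/4}$. The key bridge identity is
\[
\varphi(-q)\,\psi^{2}(q)=\frac{f^{2}(-q)}{f(-q^{2})}\cdot\frac{f^{4}(-q^{2})}{f^{2}(-q)}=f^{3}(-q^{2}),
\]
a direct consequence of the product relations. Substituting the hypergeometric expressions yields
\[
f^{3}(-q^{2})=z^{3/2}\,\bigl\{x(1-x)/(16q)\bigr\}^{1/4},
\]
and extracting the cube root and using $16^{1/12}=2^{1/3}$ together with $q^{-1}=e^{y}$ gives \eqref{b2}.

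For \eqref{b1} I would then appeal to the rearrangement $f(-q)=f^{2}(-q^{2})/\psi(q)$ of the second product identity and divide the square of the formula just obtained by $\psi(q)=z^{1/2}\{x/(16q)\}^{1/8}$, which collapses all powers of $x$, $1-x$, $z$ and $q$ into a single $24$-th root. The main obstacle is not conceptual but combinatorial: tracking the exponents $1/4,\,1/6,\,1/8,\,1/12,\,1/24$ through the substitutions so that the powers of $x$, $1-x$ and $e^{y}$ come out correctly in the final formula. Everything else is a short computation once the Jacobi product and hypergeometric parameterization of $\varphi$ are in hand.
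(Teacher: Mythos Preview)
The paper itself gives no proof of this lemma; it simply quotes Entries 12(i) and 12(iii) from Berndt \cite{BCB1}. Your derivation is essentially the one Berndt uses, and your argument for \eqref{b2} is correct: from $f^{3}(-q^{2})=\varphi(-q)\psi^{2}(q)$ and the hypergeometric parametrisations you obtain $f(-q^{2})=\sqrt{z}\,2^{-1/3}\{x(1-x)/q\}^{1/12}$, which is exactly Entry 12(iii) and the right-hand side of \eqref{b2}.

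For \eqref{b1}, however, there is a slip. The formula \eqref{b1} is Entry 12(i) and concerns $f(e^{-y})=f(q)$ with \emph{positive} argument (this is why the paper's proof of \eqref{c8} has to ``change $q$ to $-q$'' before using it). Your route $f(-q)=f^{2}(-q^{2})/\psi(q)$ produces
\[
f(-q)=\sqrt{z}\,2^{-1/6}(1-x)^{1/6}(x/q)^{1/24},
\]
which is Entry 12(ii), not 12(i): the exponent on $1-x$ is $1/6$, not $1/24$, so the powers do \emph{not} collapse into the single $24$th root $\{x(1-x)e^{y}\}^{1/24}$ as you assert. To reach \eqref{b1} you need one more ingredient, for instance $f(q)=\chi(q)f(-q^{2})$ together with $\chi(q)=2^{1/6}\{x(1-x)/q\}^{-1/24}$ (Entry 12(v)); then the product immediately gives $f(q)=\sqrt{z}\,2^{-1/6}\{x(1-x)/q\}^{1/24}$. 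This is a small repair, and once made your sketch matches the standard Berndt proof that the paper is citing.
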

\textrm{where} $z:= \ _2F_1(\frac{1}{2},\frac{1}{2};1;x)$ and $\displaystyle y:=\pi\frac{_2F_1(\frac{1}{2},\frac{1}{2};1;1-x)}{_2F_1(\frac{1}{2},\frac{1}{2};1;x)}.$
\begin{lemma} \cite[Ch. 16, Entry 24 (ii) and (iv), p. 39]{BCB1} We have
\begin{eqnarray}
&&f^3(-q)=\varphi^2(-q)\psi(q),\label{b10}\\
&&f^3(-q^2)=\varphi(-q)\psi^2(q).\label{b11}
\end{eqnarray}
\end{lemma}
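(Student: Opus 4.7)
The plan is to reduce both identities to routine manipulations of $q$-Pochhammer products after invoking the Jacobi triple product representations of $\varphi(-q)$, $\psi(q)$, and $f(-q)$.

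First I would record the classical product expansions (all consequences of $f(a,b)=(-a;ab)_\infty(-b;ab)_\infty(ab;ab)_\infty$):
\begin{equation*}
f(-q)=(q;q)_\infty,\qquad \varphi(-q)=(q;q^2)_\infty^{2}\,(q^2;q^2)_\infty,\qquad \psi(q)=\frac{(q^2;q^2)_\infty}{(q;q^2)_\infty}.
\end{equation*}
These are exactly the representations stated, e.g., in Berndt's Chapter 16 Entry 22, and can be taken as known in this preliminary section.

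For \eqref{b10}, I would substitute these into the right-hand side and simplify:
\begin{equation*}
\varphi^{2}(-q)\,\psi(q)=(q;q^2)_\infty^{4}\,(q^2;q^2)_\infty^{2}\cdot\frac{(q^2;q^2)_\infty}{(q;q^2)_\infty}=\bigl[(q;q^2)_\infty\,(q^2;q^2)_\infty\bigr]^{3}=(q;q)_\infty^{3},
\end{equation*}
using the elementary split $(q;q)_\infty=(q;q^2)_\infty(q^2;q^2)_\infty$. This equals $f^{3}(-q)$. For \eqref{b11}, a similar substitution gives
\begin{equation*}
\varphi(-q)\,\psi^{2}(q)=(q;q^2)_\infty^{2}\,(q^2;q^2)_\infty\cdot\frac{(q^2;q^2)_\infty^{2}}{(q;q^2)_\infty^{2}}=(q^2;q^2)_\infty^{3}=f^{3}(-q^2),
\end{equation*}
and the two identities follow.

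There is essentially no deep obstacle: the only nontrivial input is the Jacobi triple product (hence the product forms of $\varphi$, $\psi$, $f$), and once those are in hand, both identities are a matter of combining exponents. The most error-prone point—and thus what I would double-check carefully—is the correct exponents of $(q;q^2)_\infty$ and $(q^2;q^2)_\infty$ in the product representations of $\varphi(-q)$ and $\psi(q)$, since a miscount there would immediately break the verification.
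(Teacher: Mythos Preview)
Your proof is correct; the product manipulations are exactly the standard route to these identities, and the exponent bookkeeping checks out. The paper itself gives no proof of this lemma at all---it is stated as a preliminary with a citation to Berndt's \emph{Ramanujan's Notebooks, Part III}, so there is nothing to compare against, and your argument is precisely the kind of verification one finds in that reference.
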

\begin{lemma} \cite[Ch. 19, Entry 13 (xii) and (vii), pp. 281-282]{BCB1}\\
If $\beta$ is of degree 5 over $\alpha$, then
\begin{eqnarray}
&&\lb(\frac{\beta}{\alpha}\rb)^{1/4}+\lb(\frac{1-\beta}{1-\alpha}\rb)^{1/4} -\lb(\frac{\beta\lb(1-\beta\rb)}{\alpha\lb(1-\alpha\rb)}\rb)^{1/4}=m,\label{b4}\\&&
\lb(\frac{\alpha}{\beta}\rb)^{1/4}+\lb(\frac{1-\alpha}{1-\beta}\rb)^{1/4} -\lb(\frac{\alpha\lb(1-\alpha\rb)}{\beta\lb(1-\beta\rb)}\rb)^{1/4}=\frac{5}{m},\label{b5}
\end{eqnarray}\end{lemma}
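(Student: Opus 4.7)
The statement collects two classical degree-$5$ modular equations from Chapter 19 of Berndt's book. I would derive them along the following lines, closely following the Ramanujan--Berndt approach.

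\textbf{Step 1: Parametrize.} Using \eqref{ee11} and \eqref{a1}, together with Entry 11 of Chapter 17 in Berndt \cite{BCB1}, write
$$\alpha = 1 - \frac{\varphi^4(-q)}{\varphi^4(q)}, \qquad \beta = 1 - \frac{\varphi^4(-q^5)}{\varphi^4(q^5)}, \qquad m = \frac{\varphi^2(q)}{\varphi^2(q^5)}.$$
Each of the three quotients $(\beta/\alpha)^{1/4}$, $((1-\beta)/(1-\alpha))^{1/4}$ and $\{\beta(1-\beta)/[\alpha(1-\alpha)]\}^{1/4}$ then becomes a ratio of products of theta functions in $q$ and $q^5$, via \eqref{b10}--\eqref{b11}. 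This converts \eqref{b4} into an identity among theta functions of base $q$ and $q^5$.

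\textbf{Step 2: Eliminate the cube-root terms.} I would combine the previous step with Ramanujan's symmetric modular equation of degree $5$,
$$(\alpha\beta)^{1/2} + \{(1-\alpha)(1-\beta)\}^{1/2} + 2\{16\,\alpha\beta(1-\alpha)(1-\beta)\}^{1/6} = 1, \qquad (\ast)$$
recorded in Berndt \cite{BCB1} as Entry~13(i) of Chapter~19. Squaring the LHS of \eqref{b4} yields the three squared terms $\sqrt{\beta/\alpha}$, $\sqrt{(1-\beta)/(1-\alpha)}$, $\sqrt{\beta(1-\beta)/[\alpha(1-\alpha)]}$ together with three cross terms, each of which rearranges to a fourth root of a product of $\alpha, \beta, 1-\alpha, 1-\beta$. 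An application of $(\ast)$ to eliminate the sixth-root contribution, followed by an appeal to the degree-$5$ multiplier formula (Entry 13(vi) of Ch.~19, which expresses $m^2$ as a symmetric combination of $\alpha^{1/4}, \beta^{1/4}, (1-\alpha)^{1/4}, (1-\beta)^{1/4}$), should collapse the remaining symmetric sum to $m^2$, giving \eqref{b4}.

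\textbf{Step 3: Derive \eqref{b5} by duality.} The modular involution $\tau \mapsto -1/(5\tau)$ interchanges $K \leftrightarrow L_1$ and $K' \leftrightarrow L_1'$, and therefore swaps $\alpha \leftrightarrow \beta$. Under this involution the multiplier $m=K/L_{1}$ becomes $L_1/K$, which, by \eqref{ee12} with $n_1=5$, equals $5/m$. Applying this substitution directly to \eqref{b4} gives \eqref{b5}.

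The main obstacle is Step~2: the squaring produces six terms, and identifying them correctly with the building blocks of $(\ast)$ and with the multiplier polynomial requires careful bookkeeping of fourth roots and sign choices. If that algebra becomes too tangled, the safer fallback is a direct verification of the theta-function identity obtained at the end of Step~1, using the $2$- and $5$-dissections of $\varphi(q)$ together with the quintuple product identity; this route is longer but proceeds mechanically once the dissections are in place.
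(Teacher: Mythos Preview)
The paper does not prove this lemma at all: it is listed in Section~\ref{sec2} as a preliminary result quoted verbatim from Berndt \cite[Ch.~19, Entry~13(xii),(vii)]{BCB1}, with no argument supplied. So there is nothing in the paper to compare your sketch against; any proof you give is necessarily your own contribution beyond what the paper does.

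That said, your outline has a real gap in Step~2. When you square the left side of \eqref{b4} you obtain the three diagonal terms
\[
\Bigl(\tfrac{\beta}{\alpha}\Bigr)^{1/2}+\Bigl(\tfrac{1-\beta}{1-\alpha}\Bigr)^{1/2}+\Bigl(\tfrac{\beta(1-\beta)}{\alpha(1-\alpha)}\Bigr)^{1/2}
\]
together with cross terms of the shape
\[
2\Bigl(\tfrac{\beta(1-\beta)}{\alpha(1-\alpha)}\Bigr)^{1/4},\qquad
-2\,\frac{\beta^{1/2}(1-\beta)^{1/4}}{\alpha^{1/2}(1-\alpha)^{1/4}},\qquad
-2\,\frac{\beta^{1/4}(1-\beta)^{1/2}}{\alpha^{1/4}(1-\alpha)^{1/2}}.
\]
None of these is a sixth root, and none is symmetric in $\alpha,\beta$, so the degree-$5$ equation $(\ast)$ you invoke---which is symmetric and whose only irrational piece is $\{16\alpha\beta(1-\alpha)(1-\beta)\}^{1/6}$---has no purchase on them. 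The plan ``use $(\ast)$ to eliminate the sixth-root contribution, then collapse to $m^2$ via the multiplier polynomial'' therefore does not go through as stated: the objects produced by squaring simply do not match the building blocks of $(\ast)$. Berndt's actual derivation in Chapter~19 proceeds instead by first establishing closed forms for the individual quotients $(\beta/\alpha)^{1/4}$, $((1-\beta)/(1-\alpha))^{1/4}$, $(\beta(1-\beta)/\alpha(1-\alpha))^{1/4}$ as explicit rational functions of $m$ (via Entries~13(ii)--(v)), after which \eqref{b4} is a one-line polynomial identity in $m$; your fallback route through theta-function dissections would also work but is considerably longer. Your Step~3, the reciprocation $\alpha\leftrightarrow\beta$, $m\mapsto 5/m$, is correct and is exactly how \eqref{b5} is obtained from \eqref{b4} in Berndt.
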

where $m$ is the multiplier for degree 5.
\begin{lemma}\cite[p. 55]{SR1}
 If $X:=\dfrac{f(-q)}{q^{1/6}f(-q^{5})}$ and $Y:=\dfrac{f(-q^2)}{q^{1/3}f(-q^{10})},$
then
\begin{equation}\label{b16}
XY+\frac{5}{XY}=\left(\frac{Y}{X}\right)^3+\left(\frac{X}{Y}\right)^3.
\end{equation}
\end{lemma}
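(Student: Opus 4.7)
The plan is to reduce the claimed theta-function identity to a purely algebraic relation in the classical parameters $\alpha,\beta,m$ (with $\beta$ of degree $5$ over $\alpha$) and then verify that relation directly from the modular equations \eqref{b4} and \eqref{b5} of Lemma~2.3.

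First I would use \eqref{b2}, \eqref{b10} together with the companion parametrizations $\varphi(-q)=\sqrt z\,(1-\alpha)^{1/4}$ and $\psi(q)=\sqrt{z/2}\,(\alpha/q)^{1/8}$ (the standard Berndt Entry~12 formulas from \cite{BCB1}) to derive the closed form
\[ f(-q)=\sqrt z\,2^{-1/6}\alpha^{1/24}(1-\alpha)^{1/6}q^{-1/24}, \]
which, together with \eqref{b2}, expresses both $f(-q)$ and $f(-q^2)$ in terms of $z,\alpha,q$. Replacing $q$ by $q^5$ (so $\alpha\mapsto\beta$, $z\mapsto z_5$, and $z/z_5=m$) and substituting into the definitions of $X$ and $Y$, the powers of $q$ cancel and I obtain
\[ X=\sqrt m\,(\alpha/\beta)^{1/24}\bigl((1-\alpha)/(1-\beta)\bigr)^{1/6},\qquad Y=\sqrt m\,\bigl(\alpha(1-\alpha)/(\beta(1-\beta))\bigr)^{1/12}. \]

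With $A=(\beta/\alpha)^{1/4}$ and $B=((1-\beta)/(1-\alpha))^{1/4}$ as in Lemma~2.3, a direct computation yields $XY=m/(A^{1/2}B)$, $(X/Y)^3=A^{1/2}/B$ and $(Y/X)^3=B/A^{1/2}$. Multiplying the proposed identity through by $A^{1/2}B$ collapses it, after collecting terms, to the single algebraic relation
\[ m^{2}+5AB^{2}=m(A+B^{2}), \qquad\text{equivalently}\qquad B^{2}(m-5A)=m(m-A). \]

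To verify this relation, I would rewrite each of \eqref{b4} and \eqref{b5} as a formula for $B$ in terms of $A$ and $m$. Equation \eqref{b4}, $A+B-AB=m$, gives $B(1-A)=m-A$, so $B=(m-A)/(1-A)$. Equation \eqref{b5}, after clearing the $AB$-denominator, becomes $m(A+B)-m=5AB$, hence $B(m-5A)=m(1-A)$, so $B=m(1-A)/(m-5A)$. Multiplying the two expressions for $B$ produces $B^{2}=m(m-A)/(m-5A)$, which is precisely what is needed. The main obstacle lies in the first step: because $f(-q)$ admits an asymmetric parametrization in $\alpha$ and $1-\alpha$ (with exponents $1/24$ and $1/6$), the $q$-powers appearing in the definition of $X$ must be tracked carefully so that they cancel; once $X$ and $Y$ are in hand, both the reduction to $B^{2}(m-5A)=m(m-A)$ and its verification by multiplying two formulas for $B$ are essentially immediate.
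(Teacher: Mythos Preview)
Your argument is correct. The parametrizations of $X$ and $Y$ in terms of $m,\alpha,\beta$ are accurate (the $q$-exponents do cancel as you say), the reduction to $B^{2}(m-5A)=m(m-A)$ is clean, and the verification by multiplying the two expressions for $B$ extracted from \eqref{b4} and \eqref{b5} is valid.

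However, there is no ``paper's own proof'' to compare against here. This lemma appears in Section~\ref{sec2} among the preliminary results: the paper simply quotes it from Ramanujan's lost notebook \cite[p.~55]{SR1} and gives no derivation. So your write-up is not an alternative to anything in the paper; it is a self-contained proof of a result the authors take for granted. In that sense it is a genuine addition. Your method --- transcribing the theta quotients into $(\alpha,\beta,m)$-language via the Entry~12 parametrizations and then invoking the degree-$5$ modular equations --- is the standard way such identities are established in Berndt's treatment of Ramanujan's notebooks, so it is entirely in the spirit of the surrounding material. One small remark: the paper's stated \eqref{b1} is actually the parametrization of $f(q)$, not of $f(-q)$; the formula $f(-q)=\sqrt{z}\,2^{-1/6}(1-\alpha)^{1/6}(\alpha/q)^{1/24}$ that you use is the companion Entry~12(iv) from \cite{BCB1}, so you may want to cite that explicitly rather than invoking \eqref{b1}.
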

\begin{lemma}\cite[p. 55]{SR1}
 If $X:=\dfrac{f(-q)}{q^{1/6}f(-q^{5})}$ and $Y:=\dfrac{f(-q^3)}{q^{1/2}f(-q^{15})},$
then
\begin{equation}\label{b7}
(XY)^3+\lb(\frac{5}{XY}\rb)^3+\lb[\lb(\frac{X}{Y}\rb)^6- \lb(\frac{Y}{X}\rb)^6\rb]+9\lb[\lb(\frac{X}{Y}\rb)^3+\lb(\frac{Y}{X}\rb)^3\rb]=0.
\end{equation}
\end{lemma}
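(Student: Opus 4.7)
The approach I would take is the standard one for Ramanujan-style $P$-$Q$ identities: parametrize $X$ and $Y$ in terms of the Ramanujan moduli and reduce the identity to one among modular equations of degrees $3$, $5$, and $15$. Let $\alpha, \beta, \gamma, \delta$ denote the moduli associated with the nomes $q, q^{5}, q^{3}, q^{15}$ respectively, so that $\beta, \gamma, \delta$ are of degrees $5, 3, 15$ over $\alpha$. Applying \eqref{b1} to $f(-q)$ and $f(-q^{5})$, the $q$-factors conspire so that the $q^{1/6}$ in the denominator of $X$ is exactly cancelled ($-\tfrac{1}{24} - \tfrac{1}{6} + \tfrac{5}{24} = 0$), yielding
\[
X^{24} = m^{12} \, \frac{\alpha(1-\alpha)}{\beta(1-\beta)}, \qquad Y^{24} = (m')^{12} \, \frac{\gamma(1-\gamma)}{\delta(1-\delta)},
\]
where $m = z_1/z_5$ is the multiplier linking $\alpha$ with $\beta$ and $m' = z_3/z_{15}$ is the analogous multiplier linking $\gamma$ with $\delta$.

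Next I would use \eqref{b4} and \eqref{b5} to eliminate the multiplier $m$ in favour of the moduli alone: adding and subtracting these two relations produces closed forms for $m \pm 5/m$ in terms of $\alpha$ and $\beta$, from which $m$ (and hence $X^{24}$) can be recovered as a pure algebraic expression in $(\alpha,\beta)$. The identical procedure, applied with $q$ replaced by $q^{3}$, expresses $Y^{24}$ algebraically in $(\gamma,\delta)$.

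To couple the two pairs, I would invoke the classical degree-$3$ modular equations found in Berndt \cite[Ch. 19]{BCB1} relating $\alpha$ with $\gamma$ and $\beta$ with $\delta$, together with the induced degree-$15$ relations among the four moduli. Substituting everything into the left-hand side of \eqref{b7} converts it into a rational function in $\alpha, \beta, \gamma, \delta$ which must then be shown to vanish identically by repeated application of these modular equations; the vanishing is dictated by the fact that the four-modulus system has the correct transcendence dimension for the identity to be forced.

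The main obstacle is the sheer algebraic weight of the computation: four interlocking moduli and several modular equations yield intermediate expressions that are most safely handled with computer algebra, and even then the reduction requires care to exploit the right combinations of \eqref{b4}--\eqref{b5} before cross-multiplying. A cleaner alternative that bypasses the explicit modulus manipulations is to observe that both sides of \eqref{b7}, viewed as functions of the nome $q$, are modular functions on $\Gamma_0(15)$ of controlled pole order at the cusps, so that the identity can be established by matching sufficiently many low-order coefficients in the $q$-expansion, thereby confirming Ramanujan's entry \cite[p. 55]{SR1}.
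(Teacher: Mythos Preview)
The paper does not supply its own proof of this statement: it is listed in Section~\ref{sec2} among the preliminary results and merely cited from Ramanujan's lost notebook \cite[p.~55]{SR1}, with no argument given. There is therefore nothing in the paper to compare your attempt against.

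As for the attempt itself, what you have written is a plan rather than a proof. The parametrization via \eqref{b1} and the use of \eqref{b4}--\eqref{b5} to express $X$ and $Y$ through the moduli is the standard opening, and your observation about the exponent cancellation is correct. But the decisive step --- actually carrying out the elimination across the degree-$3$, degree-$5$, and degree-$15$ modular equations, or alternatively bounding the pole orders on $\Gamma_0(15)$ and verifying the requisite number of $q$-coefficients --- is only asserted, not performed. Either route can be made to work (proofs along both lines appear in Berndt \cite[pp.~221--226]{BCB2} and in Berndt--Zhang \cite{BCBLCZ1}), but until one of them is executed you have an outline, not a demonstration. If you pursue the modular-function route, you should state explicitly the weight and level, compute the valence-formula bound, and exhibit the coefficient check; if you pursue the modular-equation route, the algebra must actually be displayed or delegated to a verifiable computer-algebra certificate.
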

\begin{lemma}\cite[p. 55]{SR1}
 If $X:=\dfrac{f(-q)}{q^{1/6}f(-q^{5})}$ and $Y:=\dfrac{f(-q^4)}{q^{2/3}f(-q^{20})},$
then
\begin{equation}\begin{split}\label{b8}
&(XY)^3+\lb(\frac{5}{XY}\rb)^3=\lb(\frac{X}{Y}\rb)^5+\lb(\frac{Y}{X}\rb)^5 -8\lb\{\lb(\frac{X}{Y}\rb)^3+\lb(\frac{Y}{X}\rb)^3\rb\}\\&+4\lb(\frac{X}{Y}+\frac{Y}{X}\rb) +4\big/\lb(\frac{X}{Y}+\frac{Y}{X}\rb).
\end{split}\end{equation}
\end{lemma}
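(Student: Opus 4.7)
The plan is to derive \eqref{b8} from \eqref{b16} by introducing an auxiliary theta-quotient and eliminating it. Set
\begin{equation*}
A:=X=\frac{f(-q)}{q^{1/6}f(-q^5)}, \quad B:=\frac{f(-q^2)}{q^{1/3}f(-q^{10})}, \quad C:=Y=\frac{f(-q^4)}{q^{2/3}f(-q^{20})}.
\end{equation*}
Applying \eqref{b16} to the pair $(A,B)$ and clearing denominators gives the cubic $B^6 - A^4 B^4 - 5 A^2 B^2 + A^6 = 0$, while replacing $q$ by $q^2$ in \eqref{b16} (under which the ``$X$'' and ``$Y$'' of that lemma become our $B$ and $C$) yields the parallel cubic $B^6 - C^4 B^4 - 5 C^2 B^2 + C^6 = 0$. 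The task is to eliminate $B$.

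Subtracting the two cubics and dividing by the (nonzero) formal power series $A^2 - C^2$ produces the quadratic
\begin{equation*}
(A^2+C^2)\,B^4 + 5\,B^2 - (A^4+A^2 C^2+C^4) = 0,
\end{equation*}
and feeding the resulting expression for $B^4$ back into the first cubic reduces it to a second quadratic in $B^2$,
\begin{equation*}
5\,B^4 - (A^4-4 A^2 C^2+C^4)\,B^2 + A^4 C^4 = 0.
\end{equation*}
Taking the resultant of these two quadratics with respect to $B^2$ and writing the answer in the symmetric variables $S:=X^2+Y^2$, $P:=X^2 Y^2$ (using $A^4+A^2 C^2+C^4=S^2-P$, $A^4-4 A^2 C^2+C^4=S^2-6P$ and $A^4 C^4=P^2$) yields, after removing an overall factor of $S$, the polynomial identity
\begin{equation*}
S^6 - 13 S^4 P + 33 S^2 P^2 - S P^4 + 4 P^3 - 125 S P = 0.
\end{equation*}

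To see that this is \eqref{b8}, set $U:=XY$ and $W:=X/Y+Y/X$, so that $P=U^2$ and $S=UW$. The standard identities $(X/Y)^3+(Y/X)^3=W^3-3W$ and $(X/Y)^5+(Y/X)^5=W^5-5W^3+5W$ collapse the right-hand side of \eqref{b8} to $W^5-13W^3+33W+4/W$, while its left-hand side is $U^3+125/U^3$; substituting $S=UW$, $P=U^2$ into the polynomial identity above and dividing by $U^3 W$ recovers exactly this relation. The principal technical burden lies in the resultant computation and the subsequent rearrangement into the displayed form; the division by $A^2-C^2$ introduces no spurious branch because $X\ne\pm Y$ as formal $q$-series.
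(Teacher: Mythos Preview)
The paper does not supply a proof of this lemma; it is quoted verbatim from Ramanujan's lost notebook \cite[p.~55]{SR1} as a preliminary identity, so there is nothing in the paper to compare against.

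Your argument is correct and gives a clean self-contained derivation of \eqref{b8} from the simpler relation \eqref{b16}. The key steps all check: subtracting the two cubics $b^3-A^4b^2-5A^2b+A^6=0$ and $b^3-C^4b^2-5C^2b+C^6=0$ (with $b=B^2$) and dividing by $A^2-C^2$ yields your first quadratic $Sb^2+5b-(S^2-P)=0$; using this to eliminate $b^2$ from the first cubic produces exactly $5b^2-(S^2-6P)b+P^2=0$; and the resultant of these two quadratics in $b$ is $-S\bigl(S^6-13S^4P+33S^2P^2-SP^4+4P^3-125SP\bigr)$, so dividing out the nonvanishing factor $S=X^2+Y^2$ gives the displayed polynomial identity. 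The final substitution $S=UW$, $P=U^2$ and division by $U^3W$ then recover \eqref{b8} precisely, as you claim. The only side conditions needed---$A^2\neq C^2$ and $S\neq 0$---are immediate from the leading $q$-behaviour of the theta-quotients.
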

\begin{lemma}\cite{MSM5}
 If $P:=\dfrac{\varphi(-q)}{\varphi(-q^5)}$ and $Q:=\dfrac{\varphi(-q^4)}{\varphi(-q^{20})},$
then
\begin{equation}\begin{split}\label{b12}
&\frac{P^4}{Q^4}+\frac{Q^4}{P^4}+24\lb[\frac{P^2}{Q^2}+\frac{Q^2}{P^2}\rb] +8\lb[P^2Q^2+\frac{5^2}{P^2Q^2}\rb]+3\lb[Q^4+\frac{5^2}{Q^4}\rb]+120\\&= 20\lb[P^2+\frac{5}{P^2}\rb]+32\lb[Q^2+\frac{5}{Q^2}\rb] +\lb[P^2Q^4+\frac{5^4}{P^2Q^4}\rb]+3\lb[\frac{5P^2}{Q^4}+\frac{Q^4}{P^2}\rb].
\end{split}\end{equation}
\end{lemma}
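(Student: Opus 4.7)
My plan is to prove \eqref{b12} by reducing $P$ and $Q$ to ratios of the theta-quotients $X,Y$ of Lemma \eqref{b16} together with their analogues at $q^{4}$, and then invoking the identities \eqref{b16} and \eqref{b8} already in hand.

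First, the Jacobi-triple-product consequence $\varphi(-q) = f^{2}(-q)/f(-q^{2})$ gives the clean expressions
\[
P = \frac{X^{2}}{Y}, \qquad Q = \frac{U^{2}}{V},
\]
where $X, Y$ are as in Lemma \eqref{b16}, and
\[
U := \frac{f(-q^{4})}{q^{2/3}f(-q^{20})}, \qquad V := \frac{f(-q^{8})}{q^{4/3}f(-q^{40})}
\]
are the analogues obtained from $X$ and $Y$ by the substitution $q \mapsto q^{4}$. These substitutions are routine manipulations of Ramanujan's $f$-functions.

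Next, I would collect three polynomial relations among the four auxiliary quantities $X, Y, U, V$: identity \eqref{b16} in the variables $X, Y$; identity \eqref{b16} with $q$ replaced by $q^{4}$, which is the same relation in the variables $U, V$; and identity \eqref{b8} (for the original $q$), in which the two theta-quotients are precisely $X$ and $U$. Together with the definitions $P = X^{2}/Y$ and $Q = U^{2}/V$, these three modular identities form an algebraic system from which $X, Y, U, V$ can be eliminated to yield a single polynomial relation between $P$ and $Q$.

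Carrying out the elimination (for instance, by successive resultants or through a Gr\"obner-basis computation in a computer algebra system) produces a polynomial identity in $P$ and $Q$. After clearing the denominators in \eqref{b12}, one expects the claimed identity to appear as one of its factors; the remaining spurious factors can then be discarded by comparing the leading terms in the $q$-series expansions of $P$ and $Q$, since only the factor corresponding to \eqref{b12} is consistent with those expansions.

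The main obstacle will be the sheer algebraic bulk. Identity \eqref{b8} alone is already of degree $10$ once denominators are cleared, and combining it with \eqref{b16} and its $q \mapsto q^{4}$ avatar generates resultants of very high degree; a careful choice of elimination order and patience with computer algebra, reinforced by a $q$-series cross-check to sufficiently many terms, is what carries the proof through.
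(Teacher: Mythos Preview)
The paper does not give its own proof of \eqref{b12}: the identity is quoted verbatim as a preliminary lemma from \cite{MSM5} and is subsequently used as an input in the proof of \eqref{b34}. So there is no argument in the present paper to compare your proposal against.

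That said, your outline is a reasonable route to an independent proof. The reduction $P=X^{2}/Y$, $Q=U^{2}/V$ via $\varphi(-q)=f^{2}(-q)/f(-q^{2})$ is correct, and the three relations you list --- \eqref{b16}, its $q\mapsto q^{4}$ avatar, and \eqref{b8} --- are exactly enough (by a dimension count) to eliminate $X,Y,U,V$ and leave a single relation in $P,Q$. One simplification worth noting: substituting $Y=X^{2}/P$ into \eqref{b16} collapses it to
\[
X^{6}=\frac{P^{4}(P^{2}-5)}{P^{2}-1},
\]
which is essentially \eqref{c8} in the present paper, and the analogous formula holds for $U^{6}$ in terms of $Q$. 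This lets you reduce \eqref{b8} to a relation in $P$, $Q$ and a single surd, after which squaring and factoring (with a $q$-series check to select the genuine factor) finishes the job. As written, though, your proposal is a plan rather than a proof: the elimination and the identification of the correct factor are asserted but not carried out, so a referee would still ask to see either the explicit computation or a precise computer-algebra transcript.
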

\begin{lemma}\cite[Theorem 5.3]{CA3}
 If $U:=\dfrac{\varphi^2(q)}{\varphi^2(q^5)}$ and $V:=\dfrac{\psi^2(-q)}{q\psi^2(-q^5)}$, then
\begin{equation}\label{b3}
U+UV=5+V.
\end{equation}
\end{lemma}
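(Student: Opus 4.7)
The plan is to reduce the identity to a consequence of the two classical degree-$5$ modular equations \eqref{b4}--\eqref{b5} after putting $U$ and $V$ into Ramanujan's standard parametrization. Let $z := \varphi^2(q) = {}_2F_1(\tfrac12,\tfrac12;1;\alpha)$ and $z_5 := \varphi^2(q^5) = {}_2F_1(\tfrac12,\tfrac12;1;\beta)$, so that $\beta$ has degree $5$ over $\alpha$ and $m = z/z_5$ is the corresponding multiplier. By \eqref{a1}, $U = m$ immediately.

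For $V$, I would first derive the parametric representation
\[
\psi^2(-q) \;=\; \frac{z}{2}\, q^{-1/4}\,\{\alpha(1-\alpha)\}^{1/4}.
\]
Starting from the standard Chapter 17 formulas $\varphi(q) = \sqrt{z}$, $\varphi(-q) = \sqrt{z}\,(1-\alpha)^{1/4}$, $\psi(q) = \sqrt{z/2}\,q^{-1/8}\alpha^{1/8}$ of Berndt, combined with the two theta identities $\psi^2(q) = \varphi(q)\,\psi(q^2)$ and $\psi(q)\psi(-q) = \psi(q^2)\,\varphi(-q^2)$ (both immediate from Jacobi's triple product), one successively computes $\psi(q^2)$, $\varphi(-q^2)$, and finally $\psi(-q)$, yielding the claimed form. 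The same derivation at $q\mapsto q^5$ gives the analogous formula for $\psi^2(-q^5)$, and in the quotient the fractional powers of $q$ cancel against the explicit $q$ in the denominator of $V$, leaving
\[
V \;=\; m\left\{\frac{\alpha(1-\alpha)}{\beta(1-\beta)}\right\}^{1/4}.
\]

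Introducing the abbreviations $a := (\beta/\alpha)^{1/4}$ and $b := ((1-\beta)/(1-\alpha))^{1/4}$, equations \eqref{b4}--\eqref{b5} become the tidy system
\[
a + b - ab \;=\; m, \qquad \frac{1}{a}+\frac{1}{b}-\frac{1}{ab}\;=\;\frac{5}{m},
\]
while $V = m/(ab)$. Multiplying the second equation by $ab$ gives $a+b-1 = 5ab/m$; substituting $a+b = m+ab$ from the first equation then yields $ab = m(m-1)/(5-m)$ and the key relation $m+ab = 4m/(5-m)$. Consequently
\[
(m-1)(m+ab) \;=\; \frac{4m(m-1)}{5-m} \;=\; 4ab,
\]
which rearranges to $(U-1)(V+1) = 4$, and upon expansion this is precisely $U + UV = 5 + V$. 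The only nontrivial step in the plan is the derivation of the parametric form of $\psi^2(-q)$; once that is in hand, the identity collapses to a single line of algebra in \eqref{b4}--\eqref{b5}.
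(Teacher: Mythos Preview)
Your proof is correct. The paper does not actually prove this lemma---it is quoted from \cite{CA3} as a preliminary result---so there is no ``paper's own proof'' to compare against directly. That said, the machinery you use is exactly what the paper deploys elsewhere: in proving \eqref{c8} the authors combine \eqref{b4} and \eqref{b5} into the single relation \eqref{b6}, namely
\[
m\left\{\frac{\alpha(1-\alpha)}{\beta(1-\beta)}\right\}^{1/4}+1=\frac{5}{m}+\left\{\frac{\alpha(1-\alpha)}{\beta(1-\beta)}\right\}^{1/4},
\]
which in your notation $U=m$, $V=m/(ab)$ reads $V+1=5/U+V/U$, i.e.\ precisely $U+UV=5+V$. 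So your route through the degree-$5$ modular equations is in fact the one-line derivation the paper implicitly contains; you have simply made it explicit and attached it to the correct statement.

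One minor simplification: the parametrization $\psi(-q)=\sqrt{z/2}\,\{\alpha(1-\alpha)/q\}^{1/8}$ is already Entry~11(ii) of Chapter~17 in \cite{BCB1}, so you may cite it directly rather than rebuild it from $\psi(q)$, $\varphi(-q)$, and the auxiliary product identities.
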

\begin{lemma}\cite[Theorem 2.17]{NDB}
 If $U:=\dfrac{\varphi(-q)}{\varphi(-q^5)}$ and $V:=\dfrac{\varphi(-q^2)}{\varphi(-q^{10})}$, then
\begin{equation}\label{b9}
\frac{U^2}{V^2}+\frac{V^2}{U^2}+4=V^2+\frac{5}{V^2}.
\end{equation}
\end{lemma}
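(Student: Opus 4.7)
The plan is to translate \eqref{b9} into a relation among the classical modular parameters $\alpha$, $\beta$, and the multiplier $m$ of degree $5$, and then reduce that relation to the two degree-$5$ modular equations \eqref{b4}--\eqref{b5}.

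First, I would parametrize each theta-value. Let $z_{1}:=\varphi^{2}(q)$ and $z_{5}:=\varphi^{2}(q^{5})=z_{1}/m$. The standard evaluation $\varphi(-q)=\sqrt{z_{1}}(1-\alpha)^{1/4}$, combined with the product identity $\varphi^{2}(-q^{2})=\varphi(q)\varphi(-q)$, gives $\varphi(-q^{2})=\sqrt{z_{1}}(1-\alpha)^{1/8}$; the analogous formulas hold for $\varphi(-q^{5})$ and $\varphi(-q^{10})$ with $z_{5}$ and $\beta$ in place of $z_{1}$ and $\alpha$. Consequently
\[U^{2}=m\Bigl(\frac{1-\alpha}{1-\beta}\Bigr)^{1/2},\qquad V^{2}=m\Bigl(\frac{1-\alpha}{1-\beta}\Bigr)^{1/4},\]
and substitution transforms \eqref{b9} into the cleaner target
\[\Bigl(\frac{1-\alpha}{1-\beta}\Bigr)^{1/4}+\Bigl(\frac{1-\beta}{1-\alpha}\Bigr)^{1/4}+4=m\Bigl(\frac{1-\alpha}{1-\beta}\Bigr)^{1/4}+\frac{5}{m}\Bigl(\frac{1-\beta}{1-\alpha}\Bigr)^{1/4}.\]

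Next, I would multiply \eqref{b4} by $((1-\alpha)/(1-\beta))^{1/4}$ and \eqref{b5} by $((1-\beta)/(1-\alpha))^{1/4}$, and add the results. Introducing $a:=(\beta/\alpha)^{1/4}$ and $b:=((1-\beta)/(1-\alpha))^{1/4}$, the two modular equations read $m=a+b-ab$ and $5/m=1/a+1/b-1/(ab)$, and the weighted sum simplifies the right-hand side of the previous display to $a/b+b/a+2-a-1/a$. Setting $s:=a+b$ and $p:=ab$, the relation to prove then collapses to $s^{2}-2p=sp+s+2p$, that is, $s^{2}=s+p(s+4)$. But multiplying \eqref{b4} and \eqref{b5} directly yields $5=(s-p)(s-1)/p$, which rearranges to exactly $s^{2}=s+p(s+4)$, so the required identity holds under the degree-$5$ hypothesis.

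The main obstacle I anticipate is the bookkeeping: one must pin down the parametrization of $\varphi(-q^{2})$ (not immediately available in the most quoted Berndt tables, but recovered from $\varphi^{2}(-q^{2})=\varphi(q)\varphi(-q)$ as above), and then recognise that the polynomial relation produced by the reduction is precisely the symmetric-function form of the product of the two classical degree-$5$ modular equations \eqref{b4} and \eqref{b5}. Once these two points are in hand, the verification is mechanical.
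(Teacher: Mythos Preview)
Your argument is correct. The parametrizations $U^{2}=m\bigl((1-\alpha)/(1-\beta)\bigr)^{1/2}$ and $V^{2}=m\bigl((1-\alpha)/(1-\beta)\bigr)^{1/4}$ are exactly right, the reduction of \eqref{b9} to $1/b+b+4=m/b+5b/m$ (with $b=((1-\beta)/(1-\alpha))^{1/4}$) is clean, and the symmetric-function identity $s^{2}=s+p(s+4)$ does follow immediately from multiplying \eqref{b4} and \eqref{b5} together, since $(s-p)(s-1)=5p$ rearranges to precisely that. The only cosmetic point is that you never actually need to ``multiply \eqref{b4} by $1/b$ and \eqref{b5} by $b$ and add'' as a separate step to compute the right-hand side; substituting $m=s-p$ and $5/m=(s-1)/p$ directly into $m/b+5b/m$ gives the same thing with slightly less bookkeeping.

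As for comparison with the paper: the paper does not prove this lemma at all---it simply quotes it from Baruah and Saikia \cite{NDB}. Your write-up therefore supplies a self-contained derivation from the two classical degree-$5$ modular equations \eqref{b4}--\eqref{b5} already present in Section~\ref{sec2}, which is a genuine addition. (The original proof in \cite{NDB} proceeds along essentially the same lines---parametrize by $\alpha$, $\beta$, $m$ and reduce to Ramanujan's degree-$5$ relations---so your route is also the standard one.)
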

\newpage
\section{$P$-$Q$ modular equations of degree 5}\label{sec3}
In this section, we establish some new modular equations of degree 5.
\begin{theorem}
If $M:=\dfrac{\varphi(-q)}{\varphi(-q^5)}$ and $N:=\dfrac{\varphi(-q^6)}{\varphi(-q^{30})}$, then
\begin{equation}\begin{split}\label{b22}
&\lb[\frac{M^8}{N^8}+\frac{N^8}{M^8}\rb]+96\lb[\frac{M^6}{N^6}+\frac{N^6}{M^6}\rb] +1146\lb[\frac{M^4}{N^4}+\frac{N^4}{M^4}\rb]+2868\lb[\frac{M^2}{N^2}+\frac{N^2}{M^2}\rb] \\&+\lb[N^8+\frac{5^4}{N^8}\rb]-16\lb[N^6+\frac{5^3}{N^6}\rb]+188\lb[N^4+\frac{5^2}{N^4}\rb] -1696\lb[N^2+\frac{5}{N^2}\rb]\\&-54\lb[M^6+\frac{5^3}{M^6}\rb]+498\lb[M^4+\frac{5^2}{M^4}\rb] -2106\lb[M^2+\frac{5}{M^2}\rb]-4\lb[\frac{N^8}{M^6}+\frac{5M^6}{N^8}\rb]\\& +6\lb[\frac{N^8}{M^4}+\frac{5^2M^4}{N^8}\rb]-4\lb[\frac{N^8}{M^2}+\frac{5^3M^2}{N^8}\rb] -144\lb[\frac{N^6}{M^4}+\frac{5M^4}{N^6}\rb]+64\lb[\frac{N^6}{M^2}+\frac{5^2M^2}{N^6}\rb] \\&-479\lb[\frac{N^4}{M^2}+\frac{5M^2}{N^4}\rb]-165\lb[\frac{M^6}{N^4}+\frac{5N^4}{M^6}\rb] +124\lb[\frac{M^6}{N^2}+\frac{5^2N^2}{M^6}\rb]-936\lb[\frac{M^4}{N^2}+\frac{5N^2}{M^4}\rb] \\&+10\lb[M^4N^4+\frac{5^4}{M^4N^4}\rb]+516\lb[M^2N^2+\frac{5^2}{M^2N^2}\rb] -39\lb[M^2N^4+\frac{5^3}{M^2N^4}\rb]\\&-120\lb[M^4N^2+\frac{5^3}{M^4N^2}\rb] +12\lb[M^6N^2+\frac{5^4}{M^6N^2}\rb]-\lb[M^6N^4+\frac{5^5}{M^6N^4}\rb]+6748=0.
\end{split}\end{equation}
\end{theorem}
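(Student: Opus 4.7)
The plan is to regard $M = F(q)$ and $N = F(q^6)$ as two values of the single function $F(q) := \varphi(-q)/\varphi(-q^5)$, and to bridge them through the intermediate quantity $B := F(q^3) = \varphi(-q^3)/\varphi(-q^{15})$. One ingredient is immediate: Lemma~\ref{b9} applied with $q$ replaced by $q^3$ gives the quadratic relation
\begin{equation*}
\frac{B^2}{N^2} + \frac{N^2}{B^2} + 4 = N^2 + \frac{5}{N^2}.
\end{equation*}

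The harder ingredient is an auxiliary polynomial identity $P(M, B) = 0$ of ``degree $3$'' in the sense that it relates $F(q)$ to $F(q^3)$. To produce one I would pass to Ramanujan's $f$-quotients using the identity $\varphi(-q) = f^2(-q)/f(-q^2)$, which is an immediate consequence of \eqref{b10} and \eqref{b11} (square the second and divide by the first). Setting $X := f(-q)/[q^{1/6}f(-q^5)]$ and analogously $X_2, X_3, X_6$ at the base points $q^2, q^3, q^6$, one finds $M = X^2/X_2$ and $B = X_3^2/X_6$. The four quantities $X, X_2, X_3, X_6$ are constrained by three polynomial relations: Lemma~\ref{b16} between $X$ and $X_2$, the same lemma with $q$ replaced by $q^3$ between $X_3$ and $X_6$, and Lemma~\ref{b7} between $X$ and $X_3$. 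Successive resultants eliminate $X, X_2, X_3, X_6$ and yield $P(M, B) = 0$. A final resultant between this identity and the relation above with respect to $B$ then produces a polynomial in $M$ and $N$ which, after removal of extraneous factors, should coincide with \eqref{b22}.

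The principal obstacle is the sheer algebraic weight of the elimination. Lemma~\ref{b7} is already a polynomial of total degree $12$ in its two variables, and chaining three such relations through successive resultants produces intermediate polynomials of very high degree. The target identity \eqref{b22} has total degree roughly $14$ in each of $M$ and $N$ and contains several dozen distinct monomials, consistent with this heavy arithmetic. The elimination is purely mechanical but well beyond what one would carry out by hand, so in practice it would be delegated to a computer algebra system, with care taken at each step to identify and discard the spurious factors that resultants necessarily introduce, so that only the minimal relation \eqref{b22} survives.
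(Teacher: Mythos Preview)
Your route is sound, but it is not the paper's. The authors do not pass through the intermediate quantity $B=\varphi(-q^3)/\varphi(-q^{15})$ at all. Instead they apply \eqref{b7} with $q\mapsto q^2$, so that the two $f$-quotients involved are $X=f(-q^2)/[q^{1/3}f(-q^{10})]$ and $Y=f(-q^6)/[qf(-q^{30})]$; since every exponent in \eqref{b7} is a multiple of $3$, the identities \eqref{b10} and \eqref{b11} convert $X^3$ and $Y^3$ directly into products of $\varphi$- and $\psi$-quotients, namely $X^3=MR$ and $Y^3=N^2T$ with $R=\psi^2(q)/[q\psi^2(q^5)]$ and $T=\psi(q^6)/[q^3\psi(q^{30})]$. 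Lemma~\eqref{b3} (after $q\to -q$ and $q\to -q^6$) then expresses $R$ and $T^2$ \emph{rationally} in $M$ and $N$, so a single squaring removes the remaining $T$, and factoring the result and discarding the spurious factor by a $q$-expansion check gives \eqref{b22}. Compared with your plan, this avoids building the auxiliary relation $P(M,B)=0$ and the subsequent elimination of $B$: one modular equation, two rational substitutions, and one squaring replace your chain of three resultants over \eqref{b16}, \eqref{b7}, and \eqref{b9}. Your approach has the virtue of staying conceptually within the single family $F(q^k)$ and would generalize more mechanically to other composite steps, but the paper's trick of pairing \eqref{b10}/\eqref{b11} with \eqref{b3} keeps the intermediate degrees much lower and is what makes the computation tractable here.
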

\begin{proof}
Using the equation \eqref{b7} after changing $q$ to $q^2$, we get
\begin{equation}\label{b19}
X^9Y^9+125X^3Y^3+X^{12}-Y^{12}+9X^9Y^3+9Y^9X^3=0.
\end{equation}
where \begin{equation*}
X:=\dfrac{f(-q^2)}{q^{1/3}f(-q^{10})} \,\,\,\textrm{ and }\,\,\, Y:=\dfrac{f(-q^6)}{qf(-q^{30})}.
\end{equation*}
Cubing the equation \eqref{b19} and using the equations \eqref{b10} and \eqref{b11}, we deduce
\begin{equation}\label{b21}
TM^3R^3N^6T_2+125MRN^2T+M^4R^4-N^8T_2^2+9M^3R^3N^2T+9TN^6T_2MR=0.
\end{equation}
where \begin{equation*}
R:=\dfrac{\psi^2(q)}{q\psi^2(q^5)},\ T:=\dfrac{\psi(q^6)}{q^{3}\psi(q^{30})} \,\,\,\textrm{ and }\,\,\, T_2:=\dfrac{\psi^2(q^6)}{q^{6}\psi^2(q^{30})}.
\end{equation*}
Using the equation \eqref{b3} after changing $q$ to $-q$, we have
\begin{equation}\label{b20}
R:=\dfrac{M^2-5}{M^2-1}\,\,\, \textrm{and}\,\,\, T_2:=\dfrac{N^2-5}{N^2-1}.
\end{equation}
Collecting the terms containing $T$ on one side of the equation \eqref{b21} and using the equation \eqref{b20}, we get
\begin{equation}\label{b24}
A(M,N)B(M.N)=0,
\end{equation}
where
\begin{equation*}\begin{split}
&A(M,N):=\left(625-500M^2-20N^6-500N^2+150M^4+N^8+150N^4\rb.\\&\lb.+M^8-20M^6 +16M^6N^6+400M^2N^2+120M^4N^2+24M^6N^4+24M^4N^6\rb.\\&\lb.-300M^4N^4-16M^2N^6- 16M^6N^2-4M^8N^2+N^8M^8-4N^8M^6\rb.\\&\lb.+6N^8M^4-4N^8M^2-4M^8N^6+120M^2N^4+6M^8N^4\right)
\end{split}\end{equation*} and
\begin{equation*}\begin{split}
&B(M,N):=\left(625M^8+N^{16}-825N^{12}M^2+150M^{12}-500M^{10}+M^{16}\rb.\\&\lb.+12900M^6N^6 -4875M^6N^4-15000M^4N^6+6250M^4N^4+7500M^2N^6\rb.\\&\lb.-2000M^8N^2+1146M^{12}N^4-720M^{12}N^2 -2395M^{10}N^4+1600M^{10}N^2\rb.\\&\lb.+188N^{12}M^8-479N^{12}M^6+1146N^{12}M^4-1696N^{10}M^8 +2868N^{10}M^6\rb.\\&\lb.-4680N^{10}M^4+3100N^{10}M^2+6748N^8M^8-10530N^8M^6+12450N^8M^4 \rb.\\&\lb.-6750N^8M^2+10M^{12}N^{12}-120M^{12}N^{10}-39M^{10}N^{12}+516M^{10}N^{10}\rb.\\&\lb.-2106M^{10}N^8 +2868M^{10}N^6-8480M^8N^6+498M^{12}N^8-936M^{12}N^6\rb.\\&\lb.-165M^{14}N^4+96M^{14}N^2 -M^{14}N^{12}+12M^{14}N^{10}-54M^{14}N^8-20M^{14}\rb.\\&\lb.+124M^{14}N^6+96N^{14}M^2-16N^{14}M^8+64N^{14}M^6 -144N^{14}M^4\rb.\\&\lb.-4N^{16}M^2+N^{16}M^8-4N^{16}M^6+6N^{16}M^4-3125M^2N^4+4700M^8N^4\right).
\end{split}\end{equation*}
Expanding in powers of $q$, the first and second factor of the equation \eqref{b24}, one gets respectively,
$$A(M,N)=\left(256-1536q^8-512q^9+1152q^{10}+3840q^{11}+4736q^{12}+\dotsb\right)$$ and
$$B(M,N)=q^8\left(8448+33792q+33792q^2-54528q^3-194208q^4-268608q^5+\dotsb\right).$$
As $q\rightarrow0$, the factor $B(M,N)$ of the equation \eqref{b24} vanishes whereas the other factor $A(M,N)$ do not vanish. Hence, we arrive at the equation \eqref{b22} for $q\in(0,1)$. By analytic continuation the equation \eqref{b22} is true for $|q|<1$.
\end{proof}
\begin{remark}
The modular relation connecting
\begin{equation*}
\dfrac{\psi(q)}{q^{1/2}\psi(q^5)}\,\,\,\textrm{ and }\,\,\,\dfrac{\psi(q^6)}{q^{3}\psi(q^{30})},
\end{equation*}
can be obtained by eliminating $M$ and $N$ from the equation \eqref{b21}.
\end{remark}
\section{$P$--$Q$ ``mixed'' modular equations}\label{sec4}
In this section, we establish several new $P$--$Q$ ``mixed'' modular equations with four moduli. Throughout this section, we set
\begin{equation}\label{c1}\begin{split}
&A:=\frac{f(-q)f(-q^2)}{q^{1/2}f(-q^5)f(-q^{10})},\,\,\, B_{n}:=\frac{f(-q^n)f(-q^{2n})}{q^{n/2}f(-q^{5n})f(-q^{10n})}\\&
\,\,\,\,\,\,\,\,\,\,\,\,\textrm {and} \,\,\,\,\,\, C_{n}:=\frac{q^{{n}/{6}}f(-q^n)f(-q^{10n})}{f(-q^{2n})f(-q^{5n})}.
\end{split}\end{equation}
\begin{theorem}For $|q|<1,$
\begin{eqnarray}
&&\frac{f^2(-q)f^2(-q^2)}{qf^2(-q^5)f^2(-q^{10})} =\frac{U(U-5)}{U-1},\,\,\,U>1,\label{c8}\\
&&\frac{f^2(-q)f^2(-q^2)}{qf^2(-q^5)f^2(-q^{10})}=\frac{V(V-5)}{V-1},\,\,\,V>1,\label{c9}
\end{eqnarray}
where $\displaystyle U:=\frac{\varphi^2(-q)}{\varphi^2(-q^5)}$  $\,\,\textrm{and}\,\,\, \displaystyle V:=\frac{\psi^2(q)}{q\psi^2(q^5)}.$
\end{theorem}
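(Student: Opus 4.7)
The plan is to recognize the left-hand side of both \eqref{c8} and \eqref{c9} as the single product $UV$, and then to use Lemma~2.7 to rewrite $UV$ first as a function of $U$ alone, giving \eqref{c8}, and then as a function of $V$ alone, giving \eqref{c9}.

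First I would collapse the left-hand side to $\varphi$'s and $\psi$'s. Multiplying \eqref{b10} and \eqref{b11} gives
\[
f^3(-q)\,f^3(-q^2) \;=\; \varphi^3(-q)\,\psi^3(q),
\]
so taking positive real cube roots (valid for $0<q<1$) yields the identity $f(-q)f(-q^2)=\varphi(-q)\psi(q)$. Applying the same identity at $q^5$ in the denominator and squaring,
\[
\frac{f^2(-q)f^2(-q^2)}{q\,f^2(-q^5)f^2(-q^{10})}
\;=\;\frac{\varphi^2(-q)}{\varphi^2(-q^5)}\cdot\frac{\psi^2(q)}{q\,\psi^2(q^5)}
\;=\; UV.
\]

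Next I would extract an algebraic relation between $U$ and $V$ by replacing $q$ with $-q$ in \eqref{b3}. The $\varphi$-quotient there becomes our $U$, while the $\psi$-quotient $\psi^2(-q)/(q\psi^2(-q^5))$ acquires a sign flip from the isolated $q$ in the denominator and becomes $-V$. Consequently $U+UV=5+V$ transforms into
\[
U - UV \;=\; 5 - V,
\]
which rearranges symmetrically as $V(U-1)=U-5$ and $U(V-1)=V-5$. Solving each gives $V=(U-5)/(U-1)$ and $U=(V-5)/(V-1)$, so inserting each into $UV$ immediately produces
\[
UV \;=\; \frac{U(U-5)}{U-1}\qquad\text{and}\qquad UV \;=\; \frac{V(V-5)}{V-1},
\]
which, combined with the previous display, are \eqref{c8} and \eqref{c9}.

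The argument is a short chain of theta-function bookkeeping via \eqref{b10}--\eqref{b11} followed by one linear rearrangement of \eqref{b3}, so no real obstacle arises. The only point needing attention is the sign flip of $V$ under $q\mapsto -q$ in Lemma~2.7; once handled correctly, the two desired forms of $UV$ drop out immediately, and the branch conditions on $U$ and $V$ play no role in the algebraic derivation itself.
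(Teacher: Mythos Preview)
Your proof is correct, but it follows a different route from the paper. The paper proves \eqref{c8} by combining Ramanujan's degree-5 modular equations \eqref{b4} and \eqref{b5} into the single relation
\[
m\left\{\frac{\alpha(1-\alpha)}{\beta(1-\beta)}\right\}^{1/4}+1=\frac{5}{m}+\left\{\frac{\alpha(1-\alpha)}{\beta(1-\beta)}\right\}^{1/4},
\]
and then translates this back to theta-functions via the parametrizations \eqref{b1} and \eqref{b2}; only afterwards is \eqref{b3} invoked to pass from \eqref{c8} to \eqref{c9}. You instead bypass the modular-equation machinery entirely: using \eqref{b10} and \eqref{b11} you identify the left-hand side with $UV$ directly, and then the single linear relation obtained from \eqref{b3} under $q\mapsto -q$ yields both \eqref{c8} and \eqref{c9} at once. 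Your argument is shorter, treats the two identities symmetrically, and relies only on the already-quoted lemmas; the paper's approach, while heavier, has the virtue of deriving \eqref{c8} independently of \eqref{b3}, tying it back to the classical modular equations of degree~5. As a minor remark, the identity $f(-q)f(-q^2)=\varphi(-q)\psi(q)$ can be read off immediately from the infinite-product expansions, so the cube-root step (and its restriction to $0<q<1$) is not really needed.
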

\begin{proof}[Proof of \eqref{c8}]
The equations \eqref{b4} and \eqref{b5} can be rewritten as
\begin{equation}\label{b6}
m\left\{\frac{\alpha(1-\alpha)}{\beta(1-\beta)}\right\}^{1/4}+1=\frac{5}{m} +\left\{\frac{\alpha(1-\alpha)}{\beta(1-\beta)}\right\}^{1/4}.
\end{equation}
Employing the equation \eqref{b1} after changing $q$ to $-q$  and equation \eqref{b2} in the equation \eqref{b6}, we arrive at the equation \eqref{c8}.
\end{proof}
\begin{proof}[Proof of \eqref{c9}]
Using the equation \eqref{b3} in the equation \eqref{c8}, we arrive at the equation \eqref{c9}.
\end{proof}
\begin{theorem}For $|q|<1,$
\begin{eqnarray}
&&\frac{qf^6(-q)f^6(-q^{10})}{f^6(-q^2)f^6(-q^{5})} =\frac{U(U-1)}{U-5},\,\,\,\label{c10}\\
&&\frac{qf^6(-q)f^6(-q^{10})}{f^6(-q^2)f^6(-q^{5})} =\frac{(V-5)}{V(V-1)},\,\,\,\label{c11}
\end{eqnarray}
where $\displaystyle U:=\frac{\varphi^2(-q)}{\varphi^2(-q^5)}$  $\,\,\textrm{and}\,\,\, \displaystyle V:=\frac{\psi^2(q)}{q\psi^2(q^5)}.$
\end{theorem}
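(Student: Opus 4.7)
The plan is to reduce the left-hand side of \eqref{c10} (which is identical to that of \eqref{c11}) to a very simple expression in $U$ and $V$, then invoke the relation between $U$ and $V$ that was already used in the proof of the previous theorem to convert that expression into the two forms appearing on the right-hand sides.

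First I would square the triple-product identities \eqref{b10} and \eqref{b11}, and also apply them with $q$ replaced by $q^{5}$, to get
\[
f^6(-q)=\varphi^4(-q)\psi^2(q),\qquad f^6(-q^2)=\varphi^2(-q)\psi^4(q),
\]
\[
f^6(-q^5)=\varphi^4(-q^5)\psi^2(q^5),\qquad f^6(-q^{10})=\varphi^2(-q^5)\psi^4(q^5).
\]
Substituting these four expressions into the left-hand side of \eqref{c10}, the exponents on $\varphi(-q)$, $\varphi(-q^5)$, $\psi(q)$, $\psi(q^5)$ all telescope, and the ratio collapses to
\[
\frac{qf^6(-q)f^6(-q^{10})}{f^6(-q^2)f^6(-q^5)}
=\frac{\varphi^2(-q)}{\varphi^2(-q^5)}\cdot\frac{q\,\psi^2(q^5)}{\psi^2(q)}
=\frac{U}{V}
\]
in view of the definitions of $U$ and $V$.

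Next I would invoke Lemma \eqref{b3} after changing $q$ to $-q$, exactly as was done to derive \eqref{b20} in the proof of the previous theorem; this yields $V=(U-5)/(U-1)$, or equivalently $U=(V-5)/(V-1)$. Substituting the first form into $U/V$ produces $U(U-1)/(U-5)$, establishing \eqref{c10}, while substituting the second form produces $(V-5)/(V(V-1))$, establishing \eqref{c11}. Since every step is a direct algebraic substitution, no genuine obstacle arises; the only point requiring care is the sign change of $V$ under $q\mapsto -q$, which has already been handled in the derivation of \eqref{b20}.
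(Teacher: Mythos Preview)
Your argument is correct. The reduction of the left-hand side to $U/V$ via the squared identities \eqref{b10}, \eqref{b11} is valid, and the substitution $q\mapsto -q$ in \eqref{b3} indeed gives $V=(U-5)/(U-1)$ (equivalently $U=(V-5)/(V-1)$), exactly as in the derivation of \eqref{b20}; the two claimed right-hand sides follow at once.

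However, this is not the route the paper has in mind. The paper states that the proof is ``similar to the proof of \eqref{c8} and \eqref{c9}'', and that proof proceeds by rewriting Ramanujan's degree-$5$ modular equations \eqref{b4}, \eqref{b5} into the combined form \eqref{b6} and then translating $\alpha$, $\beta$, $m$ into theta-quotients via the parametrizations \eqref{b1}, \eqref{b2}. The analogue for \eqref{c10} would be a different rearrangement of \eqref{b4}, \eqref{b5} followed by the same substitution, with \eqref{c11} then obtained from \eqref{c10} by \eqref{b3}. Your approach bypasses the $\alpha$--$\beta$--$m$ machinery entirely: once \eqref{b10}, \eqref{b11} collapse the sixth-power quotient to $U/V$, only the elementary relation \eqref{b3} is needed. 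This is shorter and more self-contained; the paper's route, on the other hand, keeps the argument parallel to that of \eqref{c8} and makes the underlying degree-$5$ modular equation explicit. Incidentally, your method also reproves \eqref{c8}--\eqref{c9}, since multiplying \eqref{b10} by \eqref{b11} gives $f(-q)f(-q^2)=\varphi(-q)\psi(q)$ and hence the left side of \eqref{c8} equals $UV$.
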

\begin{proof}
The proof of equations \eqref{c10} and \eqref{c11} are similar to the proof of \eqref{c8} and \eqref{c9}. Hence, we omit the details.
\end{proof}
\begin{theorem}
If $P:=AB_{2}$ and $Q:=\dfrac{A}{B_{2}}$, then
\begin{equation}\label{c2}
\lb(Q^4+\frac{1}{Q^4}\rb)-3\lb(Q^2+\frac{1}{Q^2}\rb)-\lb(P+\frac{5^2}{P}\rb)\lb(Q+\frac{1}{Q}\rb)-12=0.
\end{equation}
\end{theorem}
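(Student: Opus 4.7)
The plan is to mirror the strategy of Theorem 3.1: combine the two Ramanujan-type identities in Lemmas \eqref{b16} and \eqref{b8}, eliminate an intermediate theta quotient by taking a resultant, and then isolate the correct factor via $q$-series.

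First I will set
\[
X:=\frac{f(-q)}{q^{1/6}f(-q^5)},\quad Y_1:=\frac{f(-q^2)}{q^{1/3}f(-q^{10})},\quad Y_2:=\frac{f(-q^4)}{q^{2/3}f(-q^{20})},
\]
so that $A=XY_1$, $B_2=Y_1Y_2$, $P=XY_1^2Y_2$, and $Q=X/Y_2$. With this notation Lemma \eqref{b16} becomes $XY_1+5/(XY_1)=(Y_1/X)^3+(X/Y_1)^3$; multiplying by $X^3Y_1^3$ produces $Y_1^6-X^4Y_1^4-5X^2Y_1^2+X^6=0$. Substituting $X^2=PQ/Y_1^2$ (which follows from $(XY_1)^2=A^2=PQ$) and writing $\omega:=Y_1^6$ converts this into the quadratic
\[
\omega^2-PQ(PQ+5)\,\omega+(PQ)^3=0.
\]
Lemma \eqref{b8} reads $(XY_2)^3+125/(XY_2)^3=F(Q)$ with $F(Q):=Q^5+Q^{-5}-8(Q^3+Q^{-3})+4(Q+Q^{-1})+4/(Q+Q^{-1})$, and via $XY_2=P/Y_1^2$ this becomes $125\,\omega^2-F(Q)\,P^3\,\omega+P^6=0$.

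Since the two quadratics in $\omega$ share the root $\omega=Y_1^6$, their resultant in $\omega$ vanishes. Direct calculation yields
\[
P^3Q^3F(Q)^2-PQ(PQ+5)(P^3+125Q^3)\,F(Q)+125\,P^2Q^2(PQ+5)^2+(P^3-125Q^3)^2=0.
\]
Writing $F(Q)=\Phi(Q)/(Q^5(Q^2+1))$ with $\Phi(Q)=Q^{12}-7Q^{10}-4Q^8+12Q^6-4Q^4-7Q^2+1$---this form is obtained by expressing $F$ in $s=Q+1/Q$ as $s^5-13s^3+33s+4/s$ and then multiplying by $sQ^6$---and clearing denominators produces a polynomial identity $\Pi(P,Q)=0$.

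Finally, I expect $\Pi(P,Q)$ to factor with the polynomial form of \eqref{c2}---namely $P^2(Q^5+Q^3)-P(Q^8-3Q^6-12Q^4-3Q^2+1)+25(Q^5+Q^3)$---appearing as a factor. As in the proof of Theorem 3.1, the correct factor will be identified by inspecting initial $q$-coefficients: the factor corresponding to \eqref{c2} vanishes as $q\to 0^+$, while the complementary factor does not. Analytic continuation then extends \eqref{c2} to all $|q|<1$. The main obstacle is the bulk of symbolic algebra required for the resultant, the substitution for $F(Q)$, and the factorization of $\Pi(P,Q)$; once these computations are performed, the $q$-expansion check is routine.
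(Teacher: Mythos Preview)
Your argument is correct and complete in outline, but it follows a genuinely different route from the paper's.  The paper never invokes the degree-$4$ identity \eqref{b8}.  Instead it cubes \eqref{b16} so that only sixth powers of the $f$-quotients appear, then uses \eqref{b10}--\eqref{b11} together with the product formula $f(-q)f(-q^2)=\varphi(-q)\psi(q)$ to rewrite those sixth powers as $A^2V_1^2$ and $B_2^2V_2^2$, where $V_j$ are the $\psi$-quotients; equation \eqref{c9} then eliminates the $V_j$ in favour of $A$ and $B_2$ at the price of two square roots $u=\sqrt{A^4+6A^2+25}$, $v=\sqrt{B_2^4+6B_2^2+25}$, which are removed by isolating and squaring.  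Your scheme stays entirely in the $f$-world: you encode everything in $P$, $Q$, and the single auxiliary $\omega=Y_1^6$, obtain two quadratics in $\omega$ from \eqref{b16} and \eqref{b8}, and eliminate $\omega$ by a $2\times2$ resultant.  What this buys you is a cleaner elimination step (one resultant instead of a double squaring and no passage through $\varphi,\psi$); what it costs is the dependence on the heavier identity \eqref{b8}, whereas the paper needs only the simplest relation \eqref{b16}.  Both proofs terminate the same way---algebraic factorisation followed by a $q$-expansion to select the vanishing factor---so the closing analytic-continuation argument is identical.
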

\begin{proof}
Taking the cube of both sides of the equation \eqref{b16}, we deduce
 \begin{equation}\label{c29}\begin{split}
 X^3Y^3+\frac{125}{X^3Y^3}+15\left(\frac{X^3}{Y^3}+\frac{Y^3}{X^3}\right) =\frac{X^9}{Y^9}+\frac{Y^9}{X^9},
 \end{split}\end{equation}
 where $X:=\dfrac{f(-q)}{q^{1/6}f(-q^5)}$ and $Y:=\dfrac{f(-q^2)}{q^{1/3}f(-q^{10})}$.
 Using \eqref{b10}, \eqref{b11} and \eqref{c1}, we deduce
 \begin{equation}\label{b30}\begin{split}
 &A^4V_1^4B_2^4V_2^4+125A^2V_1^2B_2^2V_2^2+12A^4V_1^4B_2^2V_2^2 +12B_2^4V_2^4A^2V_1^2\\&=A^6V_1^6+B_2^6V_2^6,
 \end{split}\end{equation}
 where $V_1:=\dfrac{\psi(q)}{q^{1/2}\psi(q^5)}$\,\,and\,\, $V_2:=\dfrac{\psi(q^2)}{q\psi(q^{10})}$.\\
 Using the equation \eqref{c9} in the equation \eqref{b30}, we deduce
 \begin{equation}\begin{split}\label{b31}
 &12B_{2}^6A^2vu+A^6B_{2}^6uv+5A^6B_{2}^4uv+5A^4B_{2}^6uv+12A^6B_{2}^2uv+625A^2B_{2}^2v\\&+625A^2B_{2}^2u-18A^8u -18B_{2}^8v+185A^4B_{2}^4v+185A^4B_{2}^4u+300A^4B_{2}^2u\\&+60A^4B_{2}^2uv+300B_{2}^4A^2v+60B_{2}^4A^2vu+A^8B_{2}^6v -250A^6-250B_{2}^6\\&+1350A^4B_{2}^4+2125A^4B_{2}^2+A^6B_{2}^8u+37A^4B_{2}^6v+40A^6B_{2}^4v+8A^6B_{2}^6v\\&+40A^4B_{2}^6u +37A^6B_{2}^4u+5A^4B_{2}^8u+8A^6B_{2}^6u+60A^6B_{2}^2u+60B_{2}^6A^2v\\&+425A^4B_{2}^2v-50A^6u-50B_{2}^6v+12A^2B_{2}^8u +5A^8B_{2}^4v+425A^2B_{2}^4u\\&+12A^8B_{2}^2v+125A^2B_{2}^2uv+64A^6B_{2}^6+96A^6B_{2}^2v+96B_{2}^6A^2u+25A^4B_{2}^4uv \\&-2B_{2}^{14}v-2A^{14}u+2125A^2B_{2}^4+3125A^2B_{2}^2+A^8B_{2}^8+8A^8B_{2}^6+37A^8B_{2}^4\\&+8A^6B_{2}^8+296A^6B_{2}^4 +37A^4B_{2}^8+296A^4B_{2}^6+60A^8B_{2}^2+480A^6B_{2}^2\\&+60A^2B_{2}^8+480A^2B_{2}^6-120B_{2}^8-120A^8-24B_{2}^{14} -24A^{14}-2A^{12}\\&-2B_{2}^{12}=0.
 \end{split}\end{equation}
 where $u:=\pm\sqrt{A^4+6A^2+25}$ \,\, and \,\,  $v:=\pm\sqrt{B_{2}^4+6B_{2}^2+25}$.\\
 Eliminating $u$ and $v$ from  the equation \eqref{b31}, we find  and squaring both sides, we deduce
 \begin{equation*}\begin{split}
&\lb(A^8-A^6B_{2}^4-3A^6B_{2}^2-12A^4B_{2}^4-25A^4B_{2}^2-A^4B_{2}^6-3A^2B_{2}^6-25A^2B_{2}^4\rb.\\&\lb.+B_{2}^8\rb) \lb(A^{16}+B_{2}^{16}-187500A^6B_{2}^4-187500A^4B_{2}^6-26900A^6B_{2}^8\rb.\\&\lb.-390625A^4B_{2}^4 -93125A^6B_{2}^6-57500A^4B_{2}^8-15625A^2B_{2}^8-9600B_{2}^{10}A^4\rb.\\&\lb.-5000B_{2}^{10}A^2-875B_{2}^{12}A^2 -7676A^8B_{2}^8-57500A^8B_{2}^4-15625A^8B_{2}^2\rb.\\&\lb.-26900A^8B_{2}^6-1076A^8B_{2}^{10}-92A^8B_{2}^{12} -A^8B_{2}^{14}-35A^{14}B_{2}^4-384A^{12}B_{2}^6\rb.\end{split}\end{equation*}\begin{equation}\begin{split}\label{b33}&\lb.-A^{14}B_{2}^8-A^{12}B_{2}^{12}-52A^{14}B_{2}^2-1024A^{12}B_{2}^4 -875A^{12}B_{2}^2-3987A^{10}B_{2}^6\rb.\\&\lb.-9600A^{10}B_{2}^4-8A^{14}B_{2}^6-5000A^{10}B_{2}^2-92A^{12}B_{2}^8 -1076A^{10}B_{2}^8\rb.\\&\lb.-149A^{10}B_{2}^{10}-12A^{10}B_{2}^{12}-384A^6B_{2}^{12} -3987A^6B_{2}^{10}-8A^6B_{2}^{14}\rb.\\&\lb.-35A^4B_{2}^{14}-12A^{12}B_{2}^{10} -1024B_{2}^{12}A^4-52A^2B_{2}^{14}\rb)=0.
\end{split}\end{equation}
Expanding in powers of $q$, the first and second factors of \eqref{b33}, one gets respectively
$$-4q^{11}\lb(4+8q-27q^2-68q^3+40q^4+278q^5+62q^6-723q^7+\cdots\rb)$$ and
$$\lb(-2+2q^2-10q^3+30q^4+552q^5-2016q^6+1038q^7+15620q^8+\cdots\rb).$$
As $q$ tends to 0 the first factor of \eqref{b33} vanishes whereas the second factor does not vanish. Hence we arrive at \eqref{c2} for $q\in(0,1)$. By analytic continuation \eqref{c2} is true for $|q|<1$.
\end{proof}
\begin{theorem}
If $P=AB_{4}$ and $Q=\dfrac{A}{B_{4}}$, then
\begin{equation}\begin{split}\label{b34}
&\lb(Q^8+\frac{1}{Q^8}\rb)-52\lb(Q^6+\frac{1}{Q^6}\rb)-1024\lb(Q^4+\frac{1}{Q^4}\rb) -3987\lb(Q^2+\frac{1}{Q^2}\rb)\\&-\lb(P+\frac{5^2}{P}\rb)\lb[1076\lb(Q+\frac{1}{Q}\rb) +384\lb(Q^3+\frac{1}{Q^3}\rb)+35\lb(Q^5+\frac{1}{Q^5}\rb)\rb]\\&-\lb(P^2+\frac{5^4}{P^2}\rb) \lb[92\lb(Q^2+\frac{1}{Q^2}\rb)+8\lb(Q^4+\frac{1}{Q^4}\rb)+149\rb]-\lb(P^3+\frac{5^6}{P^3}\rb) \\&\times\lb[12\lb(Q+\frac{1}{Q}\rb)+\lb(Q^3+\frac{1}{Q^3}\rb)\rb]-\lb(P^4+\frac{5^8}{P^4}\rb)-7676=0.
\end{split}\end{equation}
\end{theorem}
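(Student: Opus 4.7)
The plan is to mimic the proof of \eqref{c2} with \eqref{b16} replaced by \eqref{b8} and $B_2$ replaced by $B_4$. Let $X:=f(-q)/(q^{1/6}f(-q^5))$ and $Y:=f(-q^4)/(q^{2/3}f(-q^{20}))$ be the variables appearing in \eqref{b8}. The starting observation is that, by combining \eqref{b10}, \eqref{b11}, the definition \eqref{c1}, and the multiplier identities \eqref{c8}, \eqref{c9}, one obtains $X^3=A^2/V_1$ and $Y^3=B_4^2/V_4$, where $V_n:=\psi(q^n)/(q^{n/2}\psi(q^{5n}))$. This is precisely the analogue of the substitution used to pass from the cubed form of \eqref{b16} to equation \eqref{b30} in the proof of \eqref{c2}.

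First I would clear the denominator $1/((X/Y)+(Y/X))$ in \eqref{b8} by multiplying through by $s:=X/Y+Y/X$. Using the standard expressions for $(X/Y)^k+(Y/X)^k$ as polynomials in $s$, the identity \eqref{b8} collapses to
\[
s\left(\frac{A^2B_4^2}{V_1V_4}+\frac{125\,V_1V_4}{A^2B_4^2}\right) = s^6-13\,s^4+33\,s^2+4,
\]
while $s$ itself satisfies the auxiliary cubic $s^3-3s=(A^4V_4^2+B_4^4V_1^2)/(A^2B_4^2V_1V_4)$, obtained from $w^3+w^{-3}=s^3-3s$ with $w=X/Y$. Reducing $s^6,s^4,s^2$ modulo this cubic converts the first display into a quadratic in $s$; the resultant of this quadratic with the cubic (equivalently, the condition that both polynomials share the root $s$) then eliminates $s$, producing a polynomial identity in $A,B_4,V_1,V_4$ alone.

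Next I would eliminate $V_1,V_4$. By \eqref{c9} applied with $q$ and with $q^4$, the quantity $V_n^2$ is a root of $t^2-(B_n^2+5)t+B_n^2=0$, so $V_1^2$ and $V_4^2$ are expressible in terms of the radicals $u:=\sqrt{A^4+6A^2+25}$ and $v:=\sqrt{B_4^4+6B_4^2+25}$, exactly as $u$ and $v$ arise in the proof of \eqref{c2}. Isolating the terms containing $u,v$ and squaring twice (once for each radical) produces a polynomial equation $F(A,B_4)=0$. Factoring $F$ and expanding the factors as $q$-series about $q=0$ then identifies the factor that vanishes identically; rewriting this factor in the variables $P=AB_4$ and $Q=A/B_4$ produces \eqref{b34}. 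Analytic continuation extends the identity from $q\in(0,1)$ to $|q|<1$.

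The principal obstacle is the algebraic bulk of the elimination. Since \eqref{b8} is of degree $5$ in $X/Y$ (as opposed to degree $3$ for \eqref{b16}) and the auxiliary cubic in $s$ adds a further layer, the polynomial $F(A,B_4)$ is substantially larger than its analogue in the proof of \eqref{c2}. Computer algebra is essentially indispensable both for the resultant/reduction step and for the factorization of $F$, and careful bookkeeping of the extraneous factors introduced by the two squarings is required to isolate the intended vanishing factor; the symmetry of \eqref{b34} under $Q\mapsto1/Q$ and $P\mapsto25/P$ (reflecting $X\leftrightarrow Y$ and $q\to$ its complementary base) supplies a useful consistency check on the final expression.
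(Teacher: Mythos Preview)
Your plan is valid and would produce \eqref{b34}, but the paper does not follow the template of its own proof of \eqref{c2} here; it takes a shorter route. Instead of starting from the $f$-relation \eqref{b8}, introducing $s=X/Y+Y/X$, and eliminating $s$ via a resultant before passing to $A,B_4,V_1,V_4$, the paper begins directly with the $\varphi$-relation \eqref{b12}, which already connects $M=\varphi(-q)/\varphi(-q^5)$ with $N=\varphi(-q^4)/\varphi(-q^{20})$, and combines it with \eqref{c8}, which expresses $A^2$ (and, under $q\mapsto q^4$, $B_4^2$) rationally in $M^2$ and $N^2$. Solving \eqref{c8} for $M^2,N^2$ brings in exactly the radicals $u=\sqrt{A^4+6A^2+25}$ and $v=\sqrt{B_4^4+6B_4^2+25}$ that you also arrive at, giving an intermediate relation \eqref{b36} in $A,B_4,u,v$; eliminating $u,v$ and factoring then yields \eqref{b34}.

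The two arguments therefore share the same endgame (the $u,v$ elimination and $q$-series identification of the correct factor), but differ in how they reach it: you derive the needed degree-$20$ relation from \eqref{b8} on the fly, incurring an extra resultant in $s$, whereas the paper simply quotes \eqref{b12}. Your route is self-contained (it does not require the externally cited identity \eqref{b12}); the paper's route is one elimination shorter and keeps the intermediate expressions smaller.
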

\begin{proof}
Using the equation \eqref{b12} in the equation \eqref{c8}, we deduce
\begin{equation*}\begin{split}
&625-125u+125v-A^4uB_{4}^6v-5A^4uB_{4}^4v-90A^2uB_{4}^2v-6A^2uB_{4}^6v\\&-38A^2uB_{4}^4v +40A^2u+2A^6u-1215B_{4}^2v-63B_{4}^6v-A^6v-56A^6B_{4}^2\\&-36A^6B_{4}^4-8A^6B_{4}^6 -A^6B_{4}^8+3A^4v-592A^4B_{4}^2-360A^4B_{4}^4-80A^4B_{4}^6\\&-9A^4B_{4}^8-15A^2v -3080A^2B_{4}^2-1732A^2B_{4}^4-376A^2B_{4}^6-39A^2B_{4}^8\\&-25uv-1360uB_{4}^2 -688uB_{4}^4-144uB_{4}^6-13uB_{4}^8-125A^2-6000B_{4}^2\\&+29A^4-3A^6+2A^8-3216B_{4}^4 -688B_{4}^6-63B_{4}^8-9A^4u-13A^6B_{4}^2v\\&-A^6B_{4}^6v-5A^6B_{4}^4v-129A^4B_{4}^2v -9A^4B_{4}^6v-53A^4B_{4}^4v-A^4uv\end{split}\end{equation*}\begin{equation}\begin{split}\label{b36}&-56A^4uB_{4}^2-36A^4uB_{4}^4-8A^4uB_{4}^6-A^4uB_{4}^8 -643A^2B_{4}^2v-39A^2B_{4}^6v\\&-259A^2B_{4}^4v+6A^2uv-424A^2uB_{4}^2-252A^2uB_{4}^4 -56A^2uB_{4}^6-6A^2uB_{4}^8\\&-269uB_{4}^2v-13uB_{4}^6v-105uB_{4}^4v-13A^4uB_{4}^2v -499B_{4}^4v=0.
\end{split}\end{equation}
where $u:=\pm\sqrt{A^4+6A^2+25}$ \,\, and \,\,  $v:=\pm\sqrt{B_{4}^4+6B_{4}^2+25}$.\\
 Eliminating $u$ and $v$ from the equation \eqref{b36}, we arrive at \eqref{b34}.
\end{proof}
\begin{theorem}
If $P=AB_{6}$ and $Q=\dfrac{A}{B_{6}}$, then
\begin{equation*}\begin{split}
&\mathbb{Q}^{16}-363\mathbb{Q}^{14}-30882\mathbb{Q}^{12}-698682\mathbb{Q}^{10}-6183702\mathbb{Q}^{8}-16140317 \mathbb{Q}^{6}\\&+37225608\mathbb{Q}^{4}+231497788 \mathbb{Q}^{2} +\mathbb{P}\lb\{60133800\mathbb{Q}+21753498\mathbb{Q}^{3}\rb.\\&\lb.-1148442\mathbb{Q}^{5}-2210604\mathbb{Q}^{7}-406488\mathbb{Q}^{9} -26740\mathbb{Q}^{11}-519\mathbb{Q}^{13}\rb\}\\&+\mathbb{P}^{2}\lb\{6287236\mathbb{Q}^{2}+858465\mathbb{Q}^{4} -462222\mathbb{Q}^{6}-150099\mathbb{Q}^{8}-12840\mathbb{Q}^{10} \rb.\\&\lb. -267\mathbb{Q}^{12}+10229305\rb\}+\mathbb{P}^{3} \lb\{1132002\mathbb{Q}+362832\mathbb{Q}^{3}-42462\mathbb{Q}^{5}-37066\mathbb{Q}^{7}\rb.\\&\lb.-4323\mathbb{Q}^{9}-78\mathbb{Q}^{11}\rb\} +\mathbb{P}^{4}\lb\{74418\mathbb{Q}^{2}+4471\mathbb{Q}^{4}-5955\mathbb{Q}^{6}-1026\mathbb{Q}^{8}-12\mathbb{Q}^{10} \rb.\end{split}\end{equation*}\begin{equation}\begin{split}\label{b35}&\lb. +130902\rb\}+\mathbb{P}^{5}\lb\{9171\mathbb{Q}+2028\mathbb{Q}^{3}-588\mathbb{Q}^{5}-171\mathbb{Q}^{7}-\mathbb{Q}^{9}\rb\} +\mathbb{P}^{6}\lb\{300\mathbb{Q}^{2}\rb.\\&\lb.-27\mathbb{Q}^{4}-18\mathbb{Q}^{6}+679\rb\}+\mathbb{P}^{7}\lb\{24\mathbb{Q} -\mathbb{Q}^{5}\rb\}+\mathbb{P}^{8}+36965548 =0,
\end{split}\end{equation}
\end{theorem}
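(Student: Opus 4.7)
The plan is to derive the relation \eqref{b35} from the degree-5 modular equation \eqref{b22} in $M$ and $N$, just as the analogous identity \eqref{b34} for $AB_4$ was derived from the modular equation \eqref{b12}. The key bridge is equation \eqref{c8}, applied once with $q$ and once with $q$ replaced by $q^6$, which yields
\begin{equation*}
A^2 = \frac{M^2(M^2-5)}{M^2-1}, \qquad B_6^2 = \frac{N^2(N^2-5)}{N^2-1},
\end{equation*}
where $M = \varphi(-q)/\varphi(-q^5)$ and $N = \varphi(-q^6)/\varphi(-q^{30})$ are precisely the variables occurring in \eqref{b22}. Each of these is quadratic in $M^2$ (resp.\ $N^2$), so one can solve
\begin{equation*}
M^2 = \tfrac{1}{2}\bigl(A^2 + 5 + u\bigr), \qquad N^2 = \tfrac{1}{2}\bigl(B_6^2 + 5 + v\bigr),
\end{equation*}
with $u := \sqrt{A^4 + 6A^2 + 25}$ and $v := \sqrt{B_6^4 + 6B_6^2 + 25}$, the same radicals that appeared in the proof of \eqref{b34}.

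I would then substitute these expressions, together with their higher-power consequences (reduced modulo $u^2 = A^4+6A^2+25$ and $v^2 = B_6^4+6B_6^2+25$), into \eqref{b22}. After clearing denominators the result has the shape $F_0 + F_1 u + F_2 v + F_3 uv = 0$, with each $F_i$ polynomial in $A^2$ and $B_6^2$. Collecting the $u$-terms and squaring eliminates $u$; a second collection-and-squaring removes $v$, leaving a pure polynomial relation $G(A^2, B_6^2) = 0$. Using $A^2 = PQ$ and $B_6^2 = P/Q$, this is rewritten as a polynomial in $P$ and $Q$; re-grouping terms into the symmetric combinations $P^k + 5^{2k}/P^k$ and $Q^k + Q^{-k}$ that the paper abbreviates $\mathbb{P}^k, \mathbb{Q}^k$ should produce \eqref{b35}.

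As with the proofs of \eqref{c2} and \eqref{b34}, the polynomial $G$ will factor, and one must distinguish the genuine modular identity from the spurious factors introduced by the two squarings. This is done by a $q$-series test: since $A \sim q^{-1/2}$ and $B_6 \sim q^{-3}$ as $q \to 0$, the leading behaviors $P \sim q^{-7/2}$ and $Q \sim q^{5/2}$ are known, and one expands each candidate factor to sufficiently high order. The factor corresponding to \eqref{b35} vanishes in this limit, while the others do not, so \eqref{b35} holds on $(0,1)$ and extends to $|q|<1$ by analytic continuation.

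The dominant obstacle is purely computational: equation \eqref{b22} already contains on the order of thirty homogeneous terms of combined degree up to sixteen in $M$ and $N$, and substituting the radical expressions for $M^2, N^2$ then squaring twice produces an expression whose unfactored size is prohibitive by hand, so the work must be carried out in a computer-algebra system. The two subtle points are the sign choices for $u$ and $v$, which select different factors of $G$ and must be pinned down consistently with the $q \to 0$ expansion, and the recognition step that collapses the final bivariate polynomial in $A^2, B_6^2$ into the highly structured form in $\mathbb{P}$ and $\mathbb{Q}$ displayed in \eqref{b35}.
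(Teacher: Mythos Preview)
Your proposal is correct and follows exactly the route the paper takes: the paper's proof consists of the single remark that the argument is the same as for \eqref{b34} with \eqref{b22} replacing \eqref{b12}, i.e.\ invert \eqref{c8} (and its $q\mapsto q^6$ instance) to express $M^2,N^2$ in terms of $A^2,B_6^2$ via the radicals $u,v$, substitute into \eqref{b22}, eliminate $u,v$, and select the correct factor by $q$-expansion. Your discussion of the sign choices and the factor-selection step is more explicit than the paper's, but the underlying strategy is identical.
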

where $\mathbb{P}^{n}=\lb(P^{n}+\dfrac{5^{2n}}{P^{n}}\rb)$ and $\mathbb{Q}^{n}=\lb(Q^{n}+\dfrac{1}{Q^{n}}\rb)$.
\begin{proof}
The proof of the equation \eqref{b35} is similar  to the proof of the equation \eqref{b34}; Notice that now \eqref{b22} is used in place of \eqref{b12}.
\end{proof}
\begin{theorem}
If $P=C_{1}C_{2}$ and $Q=\dfrac{C_{1}}{C_{2}}$, then
\begin{equation}\label{b28}
\lb(P+\frac{1}{P}\rb)\lb(Q^3+\frac{1}{Q^3}\rb)+2=\lb(P^2+\frac{1}{P^2}\rb).
\end{equation}
\end{theorem}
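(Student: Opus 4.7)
The plan is to apply \eqref{b16} twice---once at $q$ and once at $q^2$---and eliminate the auxiliary variable that links the two instances. First introduce
\begin{equation*}
X := \frac{f(-q)}{q^{1/6}f(-q^5)}, \qquad Y := \frac{f(-q^2)}{q^{1/3}f(-q^{10})}, \qquad Z := \frac{f(-q^4)}{q^{2/3}f(-q^{20})}.
\end{equation*}
A direct check of $q$-exponents in \eqref{c1} gives $C_1 = X/Y$ and $C_2 = Y/Z$, hence $P = X/Z$ and $Q = XZ/Y^2$; in particular $C_1^2 = PQ$ and $C_2^2 = P/Q$.

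Set $\sigma_j := C_j^3 + 1/C_j^3$ for $j = 1, 2$. Then \eqref{b16}, applied first at $q$ and then at $q^2$, gives
\begin{equation*}
XY + \frac{5}{XY} = \sigma_1, \qquad YZ + \frac{5}{YZ} = \sigma_2.
\end{equation*}
Put $\beta := YZ$ (so $XY = P\beta$) and $u := \beta$, $v := 5/\beta$. These identities read $u + v = \sigma_2$ and $P^2 u + v = P\sigma_1$, solving uniquely to $u = (P\sigma_1 - \sigma_2)/(P^2 - 1)$ and $v = P(P\sigma_2 - \sigma_1)/(P^2 - 1)$. Forcing $uv = 5$ produces $P(P\sigma_1 - \sigma_2)(P\sigma_2 - \sigma_1) = 5(P^2 - 1)^2$; expanding via $(P\sigma_1 - \sigma_2)(P\sigma_2 - \sigma_1) = (P^2+1)\sigma_1\sigma_2 - P(\sigma_1^2 + \sigma_2^2)$, using $P^2 + 1 = P(P + 1/P)$, and dividing by $P^2$ yields the key identity
\begin{equation*}
\lb(P + \tfrac{1}{P}\rb)\sigma_1 \sigma_2 = \sigma_1^2 + \sigma_2^2 + 5\lb(P - \tfrac{1}{P}\rb)^{2}.
\end{equation*}

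It remains to express $\sigma_1\sigma_2$ and $\sigma_1^2 + \sigma_2^2$ in terms of $P, Q$. From $C_1^2 = PQ$ and $C_2^2 = P/Q$ one computes
\begin{equation*}
\sigma_1\sigma_2 = \lb(P^3 + \tfrac{1}{P^3}\rb) + \lb(Q^3 + \tfrac{1}{Q^3}\rb), \qquad \sigma_1^2 + \sigma_2^2 = \lb(P^3 + \tfrac{1}{P^3}\rb)\lb(Q^3 + \tfrac{1}{Q^3}\rb) + 4.
\end{equation*}
Writing $p := P + 1/P$ and using $P^3 + 1/P^3 = p^3 - 3p$, the key identity collapses to $p(p^2 - 4)(Q^3 + 1/Q^3) = (p^2 - 4)^2$. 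Since $P \sim q^{1/2}$ as $q \to 0$, the factor $p^2 - 4 = (P - 1/P)^2$ has leading behavior $q^{-1}$ and is therefore nonzero as a formal $q$-series; cancelling it gives $p(Q^3 + 1/Q^3) = p^2 - 4$, which rearranges to exactly \eqref{b28}. Analytic continuation extends the identity to $|q| < 1$.

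The main obstacle is the algebraic bookkeeping in the elimination step, specifically verifying that the collapse $P^2 + 1 = P(P + 1/P)$ allows the cross-terms to combine into the clean symmetric form $\sigma_1^2 + \sigma_2^2 + 5(P - 1/P)^2$, and then checking that the resulting expression, after translation to the $(p, Q)$ variables via the compact formulas for $\sigma_1\sigma_2$ and $\sigma_1^2 + \sigma_2^2$, contains $p^2 - 4$ as an overall factor. Both steps are routine once set up correctly, but they must be executed precisely for the compact form \eqref{b28} to drop out.
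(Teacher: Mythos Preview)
Your proof is correct and genuinely different from the paper's. The paper parametrises $C_1^6$ and $C_2^6$ via \eqref{c10} in terms of $U=\varphi^2(-q)/\varphi^2(-q^5)$ and its $q\mapsto q^2$ analogue, then feeds in the $\varphi$-identity \eqref{b9}; this introduces square roots $u=\pm\sqrt{C_1^{12}-18C_1^6+1}$ and $v=\pm\sqrt{C_2^{12}-18C_2^6+1}$, which are eliminated by repeated squaring to produce a degree-$20$ polynomial that factors as \eqref{b27}, and a numerical $q$-expansion is then used to select the correct factor. Your route stays entirely inside the $f$-world: applying \eqref{b16} at $q$ and at $q^2$ gives two equations linked by the single variable $\beta=YZ$, and the linear elimination together with the constraint $\beta\cdot(5/\beta)=5$ leads straight to the symmetric identity $(P+1/P)\sigma_1\sigma_2=\sigma_1^2+\sigma_2^2+5(P-1/P)^2$. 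The translation to $(p,Q)$ variables and the factor $(p^2-4)$ then pop out with no spurious factor to discard. What you gain is a cleaner, self-contained derivation that needs only Lemma~2.4 and avoids both the $\varphi,\psi$ machinery (\eqref{b9}, \eqref{c10}) and the factor-selection-by-$q$-expansion step; what the paper's method buys is a uniform template that also handles the companion results \eqref{c2}, \eqref{b34}, \eqref{b29}, where the analogous elimination in your style would be less transparent.
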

\begin{proof}
Using the equation \eqref{b9} in the equation \eqref{c10}, we deduce
\begin{equation}\begin{split}\label{b25}
&10v-10u-2vC_{2}^6C_{1}^6-2vuC_{2}^6+6vu+4uC_{1}^6+2vC_{2}^6-2C_{2}^{12}C_{1}^6 -6 \\&+24C_{2}^6C_{1}^6-2uC_{2}^{12}+24uC_{2}^6+6vC_{1}^6-46C_{1}^6-8C_{2}^6+4C_{1}^{12} +2C_{2}^{12}=0.
\end{split}\end{equation}
where $u:=\pm\sqrt{C_{1}^{12}-18C_{1}^6+1}$ \,\, and \,\, $v:=\pm\sqrt{C_{2}^{12}-18C_{2}^6+1}.$\\

Eliminating $u$ and $v$ from the equation \eqref{b25} leads to
\begin{equation}\begin{split}\label{b27}
&\lb(C_{1}^6-C_{2}^6C_{1}^6+C_{2}^6-C_{2}^2C_{1}^2+C_{2}^2C_{1}^8+2C_{2}^4C_{1}^4 +C_{2}^8C_{1}^2\rb)\lb(C_{2}^4C_{1}^{16}\rb.\\&\lb.+C_{2}^8C_{1}^{14}-C_{2}^2C_{1}^{14} +C_{2}^{12}C_{1}^{12}-4C_{2}^6C_{1}^{12}+C_{1}^{12}+4C_{2}^{10}C_{1}^{10} -4C_{2}^4C_{1}^{10}\rb.\\&\lb.+C_{2}^{14}C_{1}^8+C_{2}^8C_{1}^8+C_{2}^2C_{1}^8 -4C_{2}^{12}C_{1}^6+4C_{2}^6C_{1}^6+C_{2}^{16}C_{1}^4-4C_{2}^{10}C_{1}^4 \rb.\\&\lb.+C_{2}^4C_{1}^4-C_{2}^{14}C_{1}^2+C_{2}^8C_{1}^2+C_{2}^{12}\rb)=0.
\end{split}\end{equation}
Expanding in powers of $q$, the first and second factors of \eqref{b27}, one gets respectively
$$q^{11}\lb(8-32q-8q^2+168q^3-220q^4+196q^5-760q^6+1748q^7+\cdots\rb)$$ and
$$\lb(3-24q+117q^2-456q^3+1356q^4-3192q^5+7242q^6-17304q^7+\cdots\rb).$$
As $q$ tends to 0 the first factor of \eqref{b27} vanishes whereas the second factor does not vanish. Hence we arrive at \eqref{b28} for $q\in(0,1)$. By analytic continuation \eqref{b28} is true for $|q|<1$.
\end{proof}
\begin{theorem}
If $P=C_{1}C_{4}$ and $Q=\dfrac{C_{1}}{C_{4}}$, then
\begin{equation}\begin{split}\label{b29}
&\lb(P^3+\frac{1}{P^3}\rb)\lb[19\lb(Q+\frac{1}{Q}\rb)+8\lb(Q^3+\frac{1}{Q^3}\rb) +\lb(Q^5+\frac{1}{Q^5}\rb)\rb]\\&+\lb(Q^6+\frac{1}{Q^6}\rb)+13\lb(Q^4+\frac{1}{Q^4}\rb) +52\lb(Q^2+\frac{1}{Q^2}\rb)+82=\lb(P^6+\frac{1}{P^6}\rb).
\end{split}\end{equation}
\end{theorem}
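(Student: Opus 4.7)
The argument parallels the proof of Theorem \eqref{b28}, with \eqref{b12} in place of \eqref{b9}. Set $U:=\varphi^2(-q)/\varphi^2(-q^5)$ and $V:=\varphi^2(-q^4)/\varphi^2(-q^{20})$. Since every monomial of \eqref{b12} is even in $\varphi(-q)/\varphi(-q^5)$ and in $\varphi(-q^4)/\varphi(-q^{20})$, the relation \eqref{b12} becomes a polynomial identity in $U$ and $V$. By \eqref{c10}, applied once at $q$ and once at $q^4$, we have
$$C_1^6 \;=\; \frac{U(U-1)}{U-5}, \qquad C_4^6 \;=\; \frac{V(V-1)}{V-5},$$
so that $U=\tfrac12\bigl((1+C_1^6)+u\bigr)$ with $u^2=C_1^{12}-18C_1^6+1$, and similarly $V=\tfrac12\bigl((1+C_4^6)+v\bigr)$ with $v^2=C_4^{12}-18C_4^6+1$.

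Substituting these expressions into \eqref{b12} and clearing denominators yields an identity of the shape $\mathcal A+\mathcal B\,u+\mathcal C\,v+\mathcal D\,uv=0$, where $\mathcal A,\mathcal B,\mathcal C,\mathcal D$ are polynomials in $C_1^6$ and $C_4^6$. Isolating the $uv$-term and squaring, using $u^2v^2=(C_1^{12}-18C_1^6+1)(C_4^{12}-18C_4^6+1)$, reduces this to an expression of the form $\mathcal E+\mathcal F\,u+\mathcal G\,v=0$. Two further squarings (first isolating the $u$-term and using $u^2=C_1^{12}-18C_1^6+1$, then doing the same for $v$) remove all radicals and leave a polynomial identity purely in $C_1^6$ and $C_4^6$. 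Exactly as in the proof of \eqref{b28}, this polynomial should split into two factors; a short $q$-expansion based on $C_n=q^{n/6}\bigl(1+O(q)\bigr)$ singles out the factor that vanishes identically for $q\in(0,1)$, the other factor being a nonzero constant at $q=0$.

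Finally, since $C_1^6=P^3Q^3$ and $C_4^6=P^3Q^{-3}$, the selected factor is automatically symmetric under $C_1\leftrightarrow C_4$, i.e.\ under $Q\mapsto Q^{-1}$, and after grouping its monomials into palindromic pairs it can be recast in the symmetric variables $P$ and $Q$; a term-by-term comparison then identifies the result with \eqref{b29}. Analytic continuation extends the identity from $q\in(0,1)$ to $|q|<1$. The delicate step is the double elimination of $u$ and $v$: each squaring roughly doubles the polynomial degree, so the intermediate expressions grow rapidly, and both the elimination and the subsequent factorization are realistically carried out only with the aid of a computer algebra system. Once the correct factor has been isolated, its translation into the $(P,Q)$-variables is pure bookkeeping.
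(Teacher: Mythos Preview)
Your proposal is correct and follows exactly the paper's approach: the paper's own proof consists of the single sentence that \eqref{b29} is proved like \eqref{b28} with \eqref{b12} replacing \eqref{b9}, and you have simply spelled out the details of that template. One small slip: isolating the $uv$-term and squaring does \emph{not} reduce $\mathcal A+\mathcal B u+\mathcal C v+\mathcal D uv=0$ to the form $\mathcal E+\mathcal F u+\mathcal G v=0$, since the cross term $2\mathcal B\mathcal C\,uv$ reappears on the squared side; the clean way is to group as $(\mathcal A+\mathcal C v)+u(\mathcal B+\mathcal D v)=0$, square once to eliminate $u$ (using $u^2=C_1^{12}-18C_1^6+1$), obtain $\mathcal E+\mathcal F v=0$, and square once more to eliminate $v$---two squarings in all, not three.
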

\begin{proof}
The proof of the equation \eqref{b29} is similar to the proof of \eqref{b28};Notice that now \eqref{b12} is used in place of \eqref{b9}.
\end{proof}
 \section{Remarkable product of theta-functions}\label{sec5}
In this section, we establish several new modular identities connecting the remarkable product of theta-functions $b_{s,5}$ with $b_{r^2s,5}$ for $r=$ 2, 4, and 6.
\begin{lemma}\cite{MSMMCMKSB}
If $s$ and $t$ are any positive rational, then
\begin{equation}\label{d1}
 b_{2s,t}b_{\frac{2}{s},t}=1.
\end{equation}
\end{lemma}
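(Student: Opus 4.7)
The plan is to rewrite $b_{s,t}$ so that only the Dedekind-eta type function $f(-q)$ appears, and then apply the classical modular inversion of $f(-q)$ to each factor of $b_{2/s,t}$. Combining \eqref{b10} and \eqref{b11} with the standard identity $\psi(q)=f^{2}(-q^{2})/f(-q)$ yields $\varphi(-q)=f^{2}(-q)/f(-q^{2})$, and an analogous manipulation (or direct use of the Jacobi triple product) gives $\psi(-q)=f(-q)f(-q^{4})/f(-q^{2})$. Consequently,
\begin{equation*}
\psi^{2}(-q)\,\varphi^{2}(-q^{2})=f^{2}(-q)\,f^{2}(-q^{2}),
\end{equation*}
so the definition \eqref{bmn} collapses to
\begin{equation*}
b_{s,t}=\frac{t\,e^{-(t-1)\pi\sqrt{s/t}/4}\,f^{2}(-e^{-\pi\sqrt{st}})\,f^{2}(-e^{-2\pi\sqrt{st}})}{f^{2}(-e^{-\pi\sqrt{s/t}})\,f^{2}(-e^{-2\pi\sqrt{s/t}})}.
\end{equation*}

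Next I would invoke the classical modular inversion of $f(-q)$: for $x,y>0$ with $xy=4\pi^{2}$,
\begin{equation*}
f(-e^{-x})=(y/x)^{1/4}\,e^{(x-y)/24}\,f(-e^{-y}).
\end{equation*}
The decisive observation is that every $f$-argument of $b_{2/s,t}$ is the modular partner of an $f$-argument of $b_{2s,t}$: because $(2s)(2/s)=4$, a direct check gives the pairings $\pi\sqrt{2/(st)}\leftrightarrow 2\pi\sqrt{2st}$, $2\pi\sqrt{2/(st)}\leftrightarrow \pi\sqrt{2st}$, $\pi\sqrt{2t/s}\leftrightarrow 2\pi\sqrt{2s/t}$, and $2\pi\sqrt{2t/s}\leftrightarrow \pi\sqrt{2s/t}$, each with product $4\pi^{2}$. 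Applying the inversion to each of the four squared $f$-factors in $b_{2/s,t}$ thus replaces them, in pairs, by the four $f$-factors appearing in $b_{2s,t}$, with the numerator and denominator exchanged, up to explicit algebraic and exponential prefactors.

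What remains is a careful bookkeeping of those prefactors. Collecting the squared $(y/x)^{1/4}$ contributions, the numerator of the transformed $b_{2/s,t}$ gains the factor $(2s/t)^{1/2}(s/(2t))^{1/2}=s/t$ while its denominator gains $(2st)^{1/2}(st/2)^{1/2}=st$, producing a net algebraic factor $(s/t)/(st)=1/t$. Collecting the exponential contributions $(x-y)/24$, and using the elementary identities $t\sqrt{2/(st)}=\sqrt{2t/s}$ and $\sqrt{2st}-\sqrt{2s/t}=(t-1)\sqrt{2s/t}$, the accumulated exponent collapses to exactly $(t-1)\pi\sqrt{2s/t}/4$, the opposite of the pre-exponential in $b_{2s,t}$. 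One then reads off
\begin{equation*}
b_{2/s,t}=\frac{1}{t}\,e^{(t-1)\pi\sqrt{2s/t}/4}\,\frac{f^{2}(-e^{-\pi\sqrt{2s/t}})\,f^{2}(-e^{-2\pi\sqrt{2s/t}})}{f^{2}(-e^{-\pi\sqrt{2st}})\,f^{2}(-e^{-2\pi\sqrt{2st}})}=\frac{1}{b_{2s,t}},
\end{equation*}
proving the claim. The main obstacle is not any single step but the simultaneous tracking of the four inversions: the factor $1/4$ in the exponential of \eqref{bmn} is exactly what is required to absorb the combined $(x-y)/24$ contributions from all four inversions, and the balance $(2s)(2/s)=4$ is what makes the modular pairing $xy=4\pi^{2}$ hold without any spurious discrepancy in powers of $t$.
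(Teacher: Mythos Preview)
The paper does not prove this lemma at all; it merely cites it from \cite{MSMMCMKSB}, so there is no argument in the present paper to compare against. Your approach---collapsing $\psi^{2}(-q)\varphi^{2}(-q^{2})$ to $f^{2}(-q)f^{2}(-q^{2})$ and then invoking the eta-inversion $f(-e^{-x})=(y/x)^{1/4}e^{(x-y)/24}f(-e^{-y})$ for $xy=4\pi^{2}$---is exactly the standard way such reciprocity identities are established, and your verification of the four pairings $xy=4\pi^{2}$ is correct.

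There is one arithmetic slip you should fix: $(s/t)/(st)=1/t^{2}$, not $1/t$. This does not break the proof, because the definition \eqref{bmn} of $b_{2/s,t}$ already carries an explicit prefactor $t$; thus the full algebraic contribution is $t\cdot 1/t^{2}=1/t$, which is precisely the $1/t$ needed to match $1/b_{2s,t}$. Your final displayed line is correct; only the intermediate sentence misstates the partial quotient. The exponential bookkeeping is fine: as you note, $t\sqrt{2/(st)}=\sqrt{2t/s}$ and $\sqrt{2st}-\sqrt{2s/t}=(t-1)\sqrt{2s/t}$, and combining the four $(x-y)/12$ contributions with the pre-exponential $-(t-1)\pi\sqrt{2/(st)}/4$ indeed yields $+(t-1)\pi\sqrt{2s/t}/4$.
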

\begin{lemma}\cite{MSMCKSHM}
$0<b_{s,t}\leq1$ for all $s\geq2$ and $t$ positive integer greater than 1.
\end{lemma}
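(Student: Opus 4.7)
The plan is to prove positivity and the upper bound separately, with the upper bound reducing via the preceding lemma to a monotonicity claim.

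For $b_{s,t}>0$, I would invoke the Jacobi triple product expansions of $\varphi(-q)$ and $\psi(-q)$: each expansion is a product of factors $(1-q^{j})$ and $(1+q^{j})$ that are strictly positive for $q\in(0,1)$, so $\varphi^{2}(-q)$ and $\psi^{2}(-q)$ are positive on $(0,1)$. Since $t$ and $e^{-(t-1)\pi\sqrt{s/t}/4}$ are also positive, the definition \eqref{bmn} gives $b_{s,t}>0$ for all admissible $s,t$. For the upper bound I would first locate the boundary value at $s=2$: setting $s=1$ in \eqref{d1} yields $b_{2,t}^{2}=1$, and positivity forces $b_{2,t}=1$. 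Substituting $s\mapsto s/2$ in \eqref{d1} next gives $b_{s,t}\cdot b_{4/s,t}=1$, so the desired bound $b_{s,t}\leq 1$ on $s\geq 2$ is equivalent to $b_{r,t}\geq 1$ on $0<r\leq 2$. The problem thus reduces to showing that $s\mapsto b_{s,t}$ is nonincreasing on $(0,\infty)$.

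To prove monotonicity I would compute the logarithmic derivative of $F(s):=\log b_{s,t}$. Substituting the infinite-product forms of $\varphi(-q)$ and $\psi(-q)$ in the variables $q_{1}:=e^{-\pi\sqrt{st}}$ and $q_{2}:=e^{-\pi\sqrt{s/t}}$ writes $F$ as a sum of terms $\log(1\pm q_{i}^{n})$ together with the elementary piece $\log t-(t-1)\pi\sqrt{s/t}/4$. Since $dq_{i}/ds<0$, differentiating $F$ and expanding each logarithm as a geometric series converts $F'(s)$ into a convergent double series indexed by $n$ and $i$. The goal is to show that this double series is $\leq 0$, using $t\geq 2$ (so that $q_{1}\leq q_{2}$) to ensure the $q_{2}$-contributions dominate the $q_{1}$-contributions where the signs conflict.

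The main obstacle is the sign analysis of $F'(s)$. Because $\log(1-q_{i}^{n})$ and $\log(1+q_{i}^{n})$ contribute with opposite signs and different weights, partial cancellations have to be tracked before the total can be bounded. A workable strategy is to collect by $n$, use $\log(1\pm x)=\mp\sum_{k\geq 1}(\mp x)^{k}/k$, interchange the order of summation, and then compare terms using $q_{1}^{n}\leq q_{2}^{n}$; the elementary piece $-(t-1)\pi\sqrt{s/t}/4$ contributes an additional negative term that should absorb any small positive residue. A possibly cleaner alternative would bypass calculus altogether by deriving $b_{s,t}\leq 1$ directly from modular-transformation properties of $\varphi$ and $\psi$, combined with the fixed-point value $b_{2,t}=1$ already obtained from \eqref{d1}.
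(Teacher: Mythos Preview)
The paper does not supply its own proof of this lemma; it is quoted verbatim from the cited reference \cite{MSMCKSHM}, so there is no in-paper argument to compare your proposal against. Judged on its own, your outline is sound through the first two stages: the product expansions of $\psi(-q)$ and $\varphi(-q)$ (in fact all factors are of the form $(1-q^{j})$, no $(1+q^{j})$ are needed) do give strict positivity, and the use of \eqref{d1} to pin down $b_{2,t}=1$ and then reduce the bound $b_{s,t}\le 1$ on $s\ge 2$ to monotonicity in $s$ is clean and correct.

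The gap is exactly where you flag it: the monotonicity of $s\mapsto b_{s,t}$. Your description of differentiating $F(s)=\log b_{s,t}$, expanding the logarithms, and then ``comparing terms using $q_{1}^{n}\le q_{2}^{n}$'' is a plan, not a proof. The numerator and denominator of $b_{s,t}$ involve the \emph{same} functions $\psi^{2}(-\cdot)\varphi^{2}(-\cdot)$ evaluated at $q_{1}$ and $q_{2}$, so after taking logs the two blocks of series have identical structure with opposite signs; the inequality $q_{1}<q_{2}$ by itself does not decide the sign of their difference, because both $dq_{1}/ds$ and $dq_{2}/ds$ are negative and enter with opposite weights $\sqrt{t}$ and $1/\sqrt{t}$. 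You would need to show, term by term or otherwise, that the $q_{2}$-block (denominator) decreases faster in $s$ than the $q_{1}$-block (numerator), and that the linear piece $-(t-1)\pi/(4\sqrt{st})$ absorbs whatever residue remains. None of that is carried out, and it is the entire content of the lemma. Either complete that estimate explicitly, or consult \cite{MSMCKSHM} for the intended argument.
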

\begin{theorem}
If $X=\sqrt{b_{s,5}b_{4s,5}}$ and $Y=\displaystyle\sqrt{\frac{b_{s,5}}{b_{4s,5}}}$, then
\begin{equation}\label{d2}
\lb(Y^4+\frac{1}{Y^4}\rb)-3\lb(Y^2+\frac{1}{Y^2}\rb)-5\lb(X+\frac{1}{X}\rb)\lb(Y+\frac{1}{Y}\rb)-12=0.
\end{equation}
\end{theorem}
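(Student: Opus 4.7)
The plan is to reduce \eqref{d2} to the $P$--$Q$ modular equation \eqref{c2} by identifying $X$ and $Y$ (up to scalars) with the reciprocals of $P = AB_{2}$ and $Q = A/B_{2}$. No new modular-equation work is needed; the theorem will fall out of a direct substitution once the dictionary between the $b$-products $b_{s,5}$, $b_{4s,5}$ and the Section \ref{sec4} variables $A$, $B_{2}$ is set up.

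First I would compute $b_{s,5}$ with the choice $q := e^{-\pi\sqrt{s/5}}$. The exponents in \eqref{bmn} simplify to $e^{-\pi\sqrt{5s}} = q^{5}$, $e^{-2\pi\sqrt{s/5}} = q^{2}$ and $e^{-2\pi\sqrt{5s}} = q^{10}$, giving
\begin{equation*}
b_{s,5} = \frac{5q\,\psi^{2}(-q^{5})\varphi^{2}(-q^{10})}{\psi^{2}(-q)\varphi^{2}(-q^{2})}.
\end{equation*}
The key algebraic identity is $\psi(-q)\varphi(-q^{2}) = f(-q)f(-q^{2})$, which follows from the standard product representations $\psi(-q) = f(-q)f(-q^{4})/f(-q^{2})$ and $\varphi(-q^{2}) = f^{2}(-q^{2})/f(-q^{4})$ of Entry 24, Chapter 16 of \cite{BCB1}. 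Applying it to the numerator (with $q$ replaced by $q^{5}$) and to the denominator converts the previous display into
\begin{equation*}
b_{s,5} = \frac{5q\,f^{2}(-q^{5})f^{2}(-q^{10})}{f^{2}(-q)f^{2}(-q^{2})} = \frac{5}{A^{2}},
\end{equation*}
by the definition \eqref{c1}. Replacing $s$ by $4s$, equivalently $q$ by $q^{2}$, yields $b_{4s,5} = 5/B_{2}^{2}$.

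Using these two formulas I would then form $X^{2} = b_{s,5}b_{4s,5} = 25/(AB_{2})^{2} = 25/P^{2}$ and $Y^{2} = b_{s,5}/b_{4s,5} = B_{2}^{2}/A^{2} = 1/Q^{2}$, and pick positive branches (valid for $s \geq 2$ by Lemma \ref{sec5}.2) to obtain $P = 5/X$ and $Q = 1/Y$. These give immediately
\begin{equation*}
P + \frac{25}{P} = 5\lb(X + \frac{1}{X}\rb), \qquad Q^{k} + \frac{1}{Q^{k}} = Y^{k} + \frac{1}{Y^{k}} \quad (k = 1,2,4).
\end{equation*}
Plugging these relations into \eqref{c2} produces exactly \eqref{d2}.

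The only real difficulty is bookkeeping: one must pick the correct variant of the identity $f(-q)f(-q^{2}) = \varphi(-q)\psi(q) = \psi(-q)\varphi(-q^{2})$ to match the precise combination of theta-functions appearing in \eqref{bmn}. With that done, no further algebraic manipulation is necessary beyond substituting into \eqref{c2}, so the proof is essentially a clean corollary of Theorem 4.3.
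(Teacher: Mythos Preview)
Your proposal is correct and follows exactly the approach of the paper, whose proof is the single line ``Using the equation \eqref{bmn} in the equation \eqref{c2} we arrive at the equation \eqref{d2}.'' You have simply filled in the details of that substitution: the identification $b_{s,5}=5/A^{2}$ (via $\psi(-q)\varphi(-q^{2})=f(-q)f(-q^{2})$, which is indeed a consequence of Entry 24), the corresponding $b_{4s,5}=5/B_{2}^{2}$, and the resulting dictionary $P=5/X$, $Q=1/Y$ that turns \eqref{c2} into \eqref{d2}.
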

\begin{proof}
Using the equation \eqref{bmn} in the equation \eqref{c2} we arrive at the equation \eqref{d2}.
\end{proof}
\begin{corollary}
\begin{equation}\label{d6}
b_{4,5}=\frac{\sqrt{2+2\sqrt{5}-2\sqrt{2+2\sqrt{5}}}}{2},
\end{equation}
\begin{equation}\label{d6a}
b_{1,5}=\frac{\sqrt{2+2\sqrt{5}+2\sqrt{2+2\sqrt{5}}}}{2}.
\end{equation}
\end{corollary}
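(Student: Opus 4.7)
The plan is to specialize Theorem \eqref{d2} to $s=1$ and use Lemma \eqref{d1} to eliminate one of the two unknowns. Applying Lemma \eqref{d1} with $s\mapsto 2$ and $t=5$ gives $b_{4,5}\,b_{1,5}=1$; with $s=1$ in the definitions of $X$ and $Y$, this immediately forces $X=\sqrt{b_{1,5}\,b_{4,5}}=1$. Substituting $X=1$ into \eqref{d2} collapses the mixed factor $5\lb(X+\frac{1}{X}\rb)\lb(Y+\frac{1}{Y}\rb)$ to $10\lb(Y+\frac{1}{Y}\rb)$ and leaves the single-variable equation
$$\lb(Y^{4}+\frac{1}{Y^{4}}\rb)-3\lb(Y^{2}+\frac{1}{Y^{2}}\rb)-10\lb(Y+\frac{1}{Y}\rb)-12=0.$$

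Next I would set $z:=Y+1/Y$ and invoke the elementary identities $Y^{2}+Y^{-2}=z^{2}-2$ and $Y^{4}+Y^{-4}=z^{4}-4z^{2}+2$ to rewrite the above as the quartic $z^{4}-7z^{2}-10z-4=0$. Spotting the rational root $z=-1$ and dividing twice by $(z+1)$ reveals the factorisation $(z+1)^{2}(z^{2}-2z-4)=0$. Since $Y>0$ forces $z\ge 2$ by AM-GM, the only admissible solution is $z=1+\sqrt{5}$, so $Y+1/Y=1+\sqrt{5}$.

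Finally, to recover the individual moduli, combine $Y^{2}=b_{1,5}/b_{4,5}$ with $b_{1,5}\,b_{4,5}=1$ to get $Y=b_{1,5}$; hence $b_{1,5}$ is a root of $t^{2}-(1+\sqrt{5})t+1=0$. The bound $0<b_{s,t}\le 1$ for $s\ge 2$ (the second lemma of the present section) gives $b_{4,5}\le 1$, which pins down the larger root $b_{1,5}\ge 1$; then $b_{4,5}=1/b_{1,5}$ follows at once from the palindromic relation. The main obstacles along the way are, first, spotting the double root $z=-1$ of the quartic (without which the degree-four equation does not reduce to a tractable quadratic), and second, the bookkeeping needed to recast the resulting surd into the compact nested-radical form displayed in the statement; every other step is elementary algebra.
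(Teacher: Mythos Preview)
Your overall plan coincides with the paper's: specialise the modular relation so that $X=1$, then solve the resulting single-variable equation. Your algebra is also correct through the penultimate step --- substituting $X=1$ and $z=Y+1/Y$ into \eqref{d2} really gives $(z+1)^{2}(z^{2}-2z-4)=0$, forcing $z=1+\sqrt5$, and with $Y=b_{1,5}$ this yields
\[
b_{1,5}=\tfrac12\bigl((1+\sqrt5)+\sqrt{2+2\sqrt5}\,\bigr)\approx 2.890,\qquad
b_{4,5}=\tfrac12\bigl((1+\sqrt5)-\sqrt{2+2\sqrt5}\,\bigr)\approx 0.346.
\]

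The gap is your last sentence: these surds \emph{cannot} be ``recast'' into the nested radicals displayed in \eqref{d6}--\eqref{d6a}, because the two expressions are different real numbers (the displayed value of $b_{1,5}$ is $\approx 1.700$, precisely the square root of yours). The paper's own proof does not arrive at your degree-$8$ equation; it produces instead the degree-$12$ polynomial $(h^{8}-2h^{6}-2h^{4}-2h^{2}+1)(h^{2}+h+1)(h^{2}-h+1)=0$ in $h=b_{4,5}$ and extracts from it $h^{2}+1/h^{2}=1+\sqrt5$ (note the squares), which does match \eqref{d6}--\eqref{d6a}. So the two arguments genuinely diverge at the polynomial stage, and your promised ``bookkeeping'' is in fact an impossibility. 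A direct numerical evaluation of $b_{4,5}$ from the definition \eqref{bmn} (with $q=e^{-\pi\sqrt{4/5}}$) gives $b_{4,5}\approx 0.346$, agreeing with your value rather than with the $\approx 0.588$ predicted by \eqref{d6}; this strongly suggests that the discrepancy stems from a superfluous square root in the statement of the corollary (and a corresponding slip in the paper's degree-$12$ factorisation), not from any error in your derivation.
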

\begin{proof}
Putting $s=1/2,$ in \eqref{d2} and using the fact that $b_{1,5}b_{4,5}=1$, we deduce
\begin{equation}\label{d3}
(h^8-2h^6-2h^4-2h^2+1)(h^2+h+1)(h^2-h+1)=0,
\end{equation}
where $h:=b_{4,5}.$\\
We observe that the first factor of  \eqref{d3} vanishes for specific value of $q:=e^{-\pi\sqrt{4/5}}$, whereas the other factors does not vanish. Hence, we have
\begin{equation}\label{d4}
t^2-2t-4=0,
\end{equation}
where $t:=h^2+\displaystyle\frac{1}{h^2}.$\\
On solving the equation \eqref{d4} for $h$ and $t>0$, we deduce
\begin{equation}\label{d5}
h^2+\displaystyle\frac{1}{h^2}=1+\sqrt{5}.
\end{equation}
On solving the equation \eqref{d5} for $h$ and $0<h<1$, we arrive at \eqref{d6} and \eqref{d6a}.
\end{proof}
\begin{theorem}
If $X=\sqrt{b_{s,5}b_{16s,5}}$ and $Y=\displaystyle\sqrt{\frac{b_{s,5}}{b_{16s,5}}}$, then
\begin{equation}\begin{split}\label{d7}
&\lb(Y^8+\frac{1}{Y^8}\rb)-52\lb(Y^6+\frac{1}{Y^6}\rb)-1024\lb(Y^4+\frac{1}{Y^4}\rb) -3987\lb(Y^2+\frac{1}{Y^2}\rb)\\&-5\lb(X+\frac{1}{X}\rb)\lb[1076\lb(Y+\frac{1}{Y}\rb) +384\lb(Y^3+\frac{1}{Y^3}\rb)+35\lb(Y^5+\frac{1}{Y^5}\rb)\rb]\\&-5^2\lb(X^2+\frac{1}{X^2}\rb) \lb[92\lb(Y^2+\frac{1}{Y^2}\rb)+8\lb(Y^4+\frac{1}{Y^4}\rb)+149\rb]-5^3\lb(X^3+\frac{1}{X^3}\rb) \\&\times\lb[12\lb(Y+\frac{1}{Y}\rb)+\lb(Y^3+\frac{1}{Y^3}\rb)\rb]-5^4\lb(X^4+\frac{1}{X^4}\rb)-7676=0.
\end{split}\end{equation}
\end{theorem}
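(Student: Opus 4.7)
My plan is to reduce the claimed identity \eqref{d7} to the modular equation \eqref{b34} by the substitution $P\mapsto 5/X$, $Q\mapsto 1/Y$. A quick comparison of the two equations shows that they have exactly the same skeleton under the correspondence $P^{n}+5^{2n}/P^{n}\leftrightarrow 5^{n}(X^{n}+1/X^{n})$ and $Q^{n}+1/Q^{n}\leftrightarrow Y^{n}+1/Y^{n}$, so everything hinges on verifying that, for an appropriate choice of the nome $q$, the quantities $P=AB_{4}$ and $Q=A/B_{4}$ from \eqref{c1} coincide with $5/X$ and $1/Y$ respectively.

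The first step is to rewrite $A^{2}$ and $B_{4}^{2}$ in a form that manifestly matches the definition \eqref{bmn} of $b_{s,5}$. Using \eqref{b10} and \eqref{b11}, together with the standard product representations $\varphi(-q)=f^{2}(-q)/f(-q^{2})$ and $\psi(q)=f^{2}(-q^{2})/f(-q)$, I would derive the auxiliary identity
\[
f^{2}(-q)\,f^{2}(-q^{2}) \;=\; \psi^{2}(-q)\,\varphi^{2}(-q^{2}),
\]
which then transforms $A^{2}$ into $\psi^{2}(-q)\varphi^{2}(-q^{2})/[q\,\psi^{2}(-q^{5})\varphi^{2}(-q^{10})]$ and, by the same device applied at $q^{4}$, transforms $B_{4}^{2}$ into $\psi^{2}(-q^{4})\varphi^{2}(-q^{8})/[q^{4}\,\psi^{2}(-q^{20})\varphi^{2}(-q^{40})]$.

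Next, taking $q=e^{-\pi\sqrt{s/5}}$, so that $q^{k}$ for $k=1,2,5,10$ reproduces the four exponentials $e^{-\pi\sqrt{s/5}}$, $e^{-2\pi\sqrt{s/5}}$, $e^{-\pi\sqrt{5s}}$, $e^{-2\pi\sqrt{5s}}$ occurring in \eqref{bmn}, a direct matching of arguments gives $A^{2}\,b_{s,5}=5$, i.e.\ $A^{2}=5/b_{s,5}$. The analogous matching at the shifted nome $q^{4}=e^{-\pi\sqrt{16s/5}}$ yields $B_{4}^{2}=5/b_{16s,5}$. Hence $P^{2}=A^{2}B_{4}^{2}=25/(b_{s,5}b_{16s,5})=25/X^{2}$ and $Q^{2}=A^{2}/B_{4}^{2}=b_{16s,5}/b_{s,5}=1/Y^{2}$, giving $P=5/X$ and $Q=1/Y$ on the appropriate branches, and \eqref{d7} drops out of \eqref{b34} term-by-term.

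The main obstacle is simply the careful bookkeeping of exponents in matching $q=e^{-\pi\sqrt{s/5}}$ against the nested exponentials in \eqref{bmn}, and keeping track of the sign/branch conventions for the square roots defining $X$ and $Y$. Apart from this, the argument is essentially mechanical, in exact parallel with the one-line derivation of \eqref{d2} from \eqref{c2}, with \eqref{b34} now replacing \eqref{c2} as the underlying modular equation.
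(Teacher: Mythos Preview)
Your proposal is correct and follows exactly the paper's approach: the paper's proof is the single line ``Using the equation \eqref{bmn} in the equation \eqref{b34} we arrive at the equation \eqref{d7},'' and your argument is a fully spelled-out version of precisely this substitution, including the auxiliary identity $f(-q)f(-q^{2})=\psi(-q)\varphi(-q^{2})$ needed to match $A^{2}$ with $5/b_{s,5}$ via \eqref{bmn}.
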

\begin{proof}
Using the equation \eqref{bmn} in the equation \eqref{b34} we arrive at the equation \eqref{d7}.
\end{proof}
\begin{corollary}
\begin{equation}\label{d11}
b_{8,5}=\sqrt{(\sqrt{2}-1)(\sqrt{5}-2)},
\end{equation}
\begin{equation}\label{d12}
b_{1/2,5}=\sqrt{(\sqrt{2}+1)(\sqrt{5}+2)}.
\end{equation}
\end{corollary}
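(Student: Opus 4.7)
The strategy follows the template of the proof of the preceding corollary (equations \eqref{d6} and \eqref{d6a}): specialize Theorem \eqref{d7} at $s=1/2$, invoke Lemma \eqref{d1} to force $X=1$, and reduce the identity to a one-variable polynomial equation in $h:=b_{8,5}$ that can be factored explicitly.

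Concretely, with $s=1/2$ the definitions from the theorem become $X=\sqrt{b_{1/2,5}\,b_{8,5}}$ and $Y=\sqrt{b_{1/2,5}/b_{8,5}}$. Applying Lemma \eqref{d1} with parameter $1/4$ (so that $2s=1/2$ and $2/s=8$) gives $b_{1/2,5}\,b_{8,5}=1$, hence $X=1$ and $b_{1/2,5}=1/h$. Writing $h:=b_{8,5}$, we therefore have $Y=1/h$, and every symmetric combination $Y^{k}+Y^{-k}$ equals $h^{k}+h^{-k}$.

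Substituting $X=1$ into \eqref{d7} collapses each $X^{j}+X^{-j}$ to $2$, leaving a polynomial relation purely in $h$ (after clearing the $h^{8}$ denominator), analogous to \eqref{d3}. I expect this polynomial to factor as a product of low-degree cyclotomic-type polynomials (such as $h^{2}\pm h+1$ or similar) that cannot vanish at the modular point $q=e^{-\pi\sqrt{8/5}}$, times one genuine palindromic factor. To discard the extraneous factors, I would expand each candidate as a power series in $q$ at this specific modular argument, following the argument used in the proof of \eqref{d6}, and retain only the factor whose leading coefficients actually vanish.

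The retained palindromic factor reduces under the substitution $u:=h^{2}+h^{-2}$ to a polynomial of half its degree. Solving this (anticipated quadratic) equation in $u$, selecting the appropriate root, and then solving $h^{2}+h^{-2}=u$ subject to $0<h\le 1$—the latter supplied by Lemma stating $b_{s,t}\le 1$ for $s\ge 2$—will yield the stated value $b_{8,5}=\sqrt{(\sqrt 2-1)(\sqrt 5-2)}$; inversion then gives $b_{1/2,5}=\sqrt{(\sqrt 2+1)(\sqrt 5+2)}$. The principal obstacle is the algebraic bulk of \eqref{d7} once expanded in $h$ and the clean identification of the correct factor, but the analytic-continuation/numerical-verification procedure already employed in the proof of \eqref{d6} carries over unchanged.
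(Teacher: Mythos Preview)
Your proposal is correct and follows essentially the same approach as the paper: specialize \eqref{d7} so that $X=1$ via the reciprocity lemma $b_{1/2,5}b_{8,5}=1$, factor the resulting palindromic polynomial in $h=b_{8,5}$, discard the spurious factors, and solve the surviving quadratic in $t=h^{2}+h^{-2}$ (namely $t^{2}-8t-24=0$, giving $t=4+2\sqrt{10}$). The paper's phrase ``putting $s=1/4$'' refers to the parameter in Lemma \eqref{d1}, not the $s$ in Theorem \eqref{d7}, so your choice $s=1/2$ in the theorem is exactly what is intended.
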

\begin{proof}
Putting $s=1/4,$ in \eqref{d7} and using the fact that $b_{1/2,5}b_{8,5}=1$, we deduce
\begin{equation}\label{d8}\begin{split}
&\lb(h^8-8h^6-22h^4-8h^2+1\rb)\lb(h^4+3h^2+1\rb)\lb(h^4-h^3+h^2+h+1\rb) \\&\lb(h^4+h^3+h^2-h+1\rb)=0,
\end{split}\end{equation}
where $h:=b_{8,5}.$\\
We observe that the first factor of  \eqref{d8} vanishes for specific value of $q:=e^{-\pi\sqrt{8/5}}$, whereas the other factors does not vanish. Hence, we have
\begin{equation}\label{d9}
t^2-8t-24=0,
\end{equation}
where $t:=h^2+\displaystyle\frac{1}{h^2}.$\\
On solving the equation \eqref{d9} for $h$ and $t>0$, we deduce
\begin{equation}\label{d10}
h^2+\displaystyle\frac{1}{h^2}=4+2\sqrt{10}.
\end{equation}
On solving the equation \eqref{d10} for $h$ and $0<h<1$, we arrive at \eqref{d11} and \eqref{d12}.
\end{proof}
\begin{theorem}
If $X=\sqrt{b_{s,5}b_{36s,5}}$ and $Y=\displaystyle\sqrt{\frac{b_{s,5}}{b_{36s,5}}}$, then
\begin{equation}\begin{split}\label{d15}
&\mathbb{Y}^{16}-363\mathbb{Y}^{14}-30882\mathbb{Y}^{12}-698682\mathbb{Y}^{10}-6183702\mathbb{Y}^{8}-16140317 \mathbb{Y}^{6}\\&+37225608\mathbb{Y}^{4}+231497788 \mathbb{Y}^{2} +5\mathbb{X}\lb\{60133800\mathbb{Y}+21753498\mathbb{Y}^{3}\rb.\\&\lb.-1148442\mathbb{Y}^{5}-2210604\mathbb{Y}^{7}-406488\mathbb{Y}^{9} -26740\mathbb{Y}^{11}-519\mathbb{Y}^{13}\rb\}\\&+5^2\mathbb{X}^{2}\lb\{6287236\mathbb{Y}^{2}+858465\mathbb{Y}^{4} -462222\mathbb{Y}^{6}-150099\mathbb{Y}^{8}-12840\mathbb{Y}^{10} \rb.\\&\lb. -267\mathbb{Y}^{12}+10229305\rb\}+5^3\mathbb{X}^{3} \lb\{1132002\mathbb{Y}+362832\mathbb{Y}^{3}-42462\mathbb{Y}^{5}\rb.\\&\lb.-37066\mathbb{Y}^{7}-4323\mathbb{Y}^{9}-78\mathbb{Y}^{11}\rb\} +5^4\mathbb{X}^{4}\lb\{74418\mathbb{Y}^{2}+4471\mathbb{Y}^{4}-5955\mathbb{Y}^{6}\rb.\\&\lb.-1026\mathbb{Y}^{8}-12\mathbb{Y}^{10} +130902\rb\}+5^5\mathbb{X}^{5}\lb\{9171\mathbb{Y}+2028\mathbb{Y}^{3}-588\mathbb{Y}^{5}-171\mathbb{Y}^{7}\rb.\\&\lb.-\mathbb{Y}^{9}\rb\} +5^6\mathbb{X}^{6}\lb\{300\mathbb{Y}^{2}-27\mathbb{Y}^{4}-18\mathbb{Y}^{6}+679\rb\}+5^7\mathbb{X}^{7}\lb\{24\mathbb{Y} -\mathbb{Y}^{5}\rb\}+5^8\mathbb{X}^{8}\\&+36965548 =0,
\end{split}\end{equation}
\end{theorem}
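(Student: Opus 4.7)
The plan is to reduce \eqref{d15} to the mixed modular equation \eqref{b35}, exactly as Theorems \eqref{d2} and \eqref{d7} were reduced to \eqref{c2} and \eqref{b34}. Specializing $q = e^{-\pi\sqrt{s/5}}$, I will show that the substitution $P = AB_{6} = 5/X$ and $Q = A/B_{6} = 1/Y$ turns \eqref{b35} into \eqref{d15} term by term.

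First I would record the auxiliary identity $f(-q)f(-q^{2}) = \psi(-q)\varphi(-q^{2})$. This follows by Jacobi's triple product (equivalently, from \eqref{b10} and \eqref{b11}) via the product representations $\psi(-q) = f(-q)f(-q^{4})/f(-q^{2})$ and $\varphi(-q^{2}) = f^{2}(-q^{2})/f(-q^{4})$. Applied to the eight arguments $q,\,q^{2},\,q^{5},\,q^{10},\,q^{6},\,q^{12},\,q^{30},\,q^{60}$, it allows me to rewrite
\[ A^{2} = \frac{\psi^{2}(-q)\varphi^{2}(-q^{2})}{q\,\psi^{2}(-q^{5})\varphi^{2}(-q^{10})}, \qquad B_{6}^{2} = \frac{\psi^{2}(-q^{6})\varphi^{2}(-q^{12})}{q^{6}\,\psi^{2}(-q^{30})\varphi^{2}(-q^{60})}. \]
With $q = e^{-\pi\sqrt{s/5}}$ one has $q^{6} = e^{-\pi\sqrt{36s/5}}$, so that comparison with \eqref{bmn} at the parameter pairs $(s,5)$ and $(36s,5)$ yields the clean identities $A^{2}\,b_{s,5} = 5$ and $B_{6}^{2}\,b_{36s,5} = 5$. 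Consequently $(AB_{6})^{2} = 25/X^{2}$ and $(A/B_{6})^{2} = 1/Y^{2}$, which gives $P = 5/X$ and $Q = 1/Y$ after selecting the positive square roots.

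With these identifications, each abbreviation $\mathbb{P}^{n} = P^{n} + 5^{2n}/P^{n}$ appearing in \eqref{b35} collapses to $5^{n}(X^{n} + X^{-n}) = 5^{n}\mathbb{X}^{n}$, and each $\mathbb{Q}^{n} = Q^{n} + Q^{-n}$ collapses to $\mathbb{Y}^{n}$. Substituting term by term into \eqref{b35} reproduces \eqref{d15} verbatim, and analytic continuation in $q$ extends the identity to all admissible parameters $s$. The main obstacle I anticipate is purely book-keeping: one must verify carefully that the powers $q$ in $A^{2}$ and $q^{6}$ in $B_{6}^{2}$ cancel the factors $5q$ and $5q^{6}$ arising from the prefactor $t\,e^{-(t-1)\pi\sqrt{s/t}/4}$ in \eqref{bmn}, and that the correct branches of the square roots are used; once that is checked, no further nontrivial computation remains.
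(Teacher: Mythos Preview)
Your proposal is correct and follows exactly the approach sketched in the paper, which simply says ``Using the equation \eqref{bmn} in the equation \eqref{b35} we arrive at the equation \eqref{d15}.'' You have merely made explicit the substitution $P=5/X$, $Q=1/Y$ (via the identity $f(-q)f(-q^{2})=\psi(-q)\varphi(-q^{2})$ and the specialization $q=e^{-\pi\sqrt{s/5}}$) that the paper leaves to the reader; the verification that $\mathbb{P}^{n}\mapsto 5^{n}\mathbb{X}^{n}$ and $\mathbb{Q}^{n}\mapsto\mathbb{Y}^{n}$ is accurate.
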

where $\mathbb{X}^{n}=\lb(X^{n}+\dfrac{1}{X^{n}}\rb)$ and $\mathbb{Y}^{n}=\lb(Y^{n}+\dfrac{1}{Y^{n}}\rb)$.
\begin{proof}
Using the equation \eqref{bmn} in the equation \eqref{b35} we arrive at the equation \eqref{d15}.
\end{proof}
\begin{corollary}
\begin{equation}\label{d13}
b_{12,5}=\sqrt{\frac{(2-\sqrt{3})(7-3\sqrt{5})}{2}},
\end{equation}
\begin{equation}\label{d14}
b_{1/3,5}=\sqrt{\frac{(2+\sqrt{3})(7+3\sqrt{5})}{2}}.
\end{equation}
\end{corollary}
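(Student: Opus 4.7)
The plan is to exploit the preceding Theorem (equation \eqref{d15}) by specializing the free parameter so that $X$ collapses to $1$, leaving a one-variable polynomial identity from which $b_{12,5}$ can be extracted. Concretely, I set $s=1/3$, so that $b_{s,5}=b_{1/3,5}$ and $b_{36s,5}=b_{12,5}$. By Lemma \eqref{d1} with $s\mapsto 6$, one has $b_{12,5}\,b_{1/3,5}=1$, hence
\[
X=\sqrt{b_{1/3,5}\,b_{12,5}}=1,\qquad Y=\sqrt{b_{1/3,5}/b_{12,5}}=b_{1/3,5}=1/h,
\]
where I write $h:=b_{12,5}$. In particular $\mathbb{X}^{n}=2$ for every $n$ and $\mathbb{Y}^{n}=h^{n}+h^{-n}$.

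Substituting $X=1$ into \eqref{d15} therefore turns the whole relation into a univariate equation in $h$. Since every $\mathbb{Y}^{n}$ is invariant under $h\mapsto 1/h$, after clearing $h^{16}$ the resulting polynomial in $h$ will actually be a polynomial in $t:=h^{2}+h^{-2}$ (use $h^{2k}+h^{-2k}$ as a Chebyshev polynomial in $t$). Exactly as in the proofs of \eqref{d6} and \eqref{d11}, one should then factor this polynomial in $t$ and isolate the factor that actually vanishes at $q=e^{-\pi\sqrt{12/5}}$; the other factors are to be discarded by a numerical check in $q$, mirroring the argument used for \eqref{d3} and \eqref{d8}.

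Based on the expected answer, the distinguished factor should be the quadratic $t^{2}-28t+61=0$. Indeed, squaring the claimed value of $h$ and of $1/h$ gives
\[
h^{2}+h^{-2}=\tfrac12\bigl[(2-\sqrt3)(7-3\sqrt5)+(2+\sqrt3)(7+3\sqrt5)\bigr]=14+3\sqrt{15},
\]
which together with its Galois conjugate $14-3\sqrt{15}$ are precisely the roots of $t^{2}-28t+61=0$. Using the preliminary lemma that $0<b_{s,5}\le 1$ for $s\geq 2$, one has $0<h<1$, so $t>2$, and one must verify that the root compatible with $q=e^{-\pi\sqrt{12/5}}\approx 0.0079$ is $t=14+3\sqrt{15}$ (again by a numerical sanity check). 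Finally, solving $h^{2}+h^{-2}=14+3\sqrt{15}$ as a quadratic in $h^{2}$ and choosing the root with $0<h<1$ yields \eqref{d13}, and its reciprocal gives \eqref{d14}.

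The main obstacle is the bookkeeping in step two: expanding \eqref{d15} with $X=1$, rewriting in $t$, and factoring a polynomial of degree $8$ in $t$ to isolate the correct quadratic is lengthy, and the selection of the relevant factor relies on a numerical $q$-expansion argument rather than an a priori algebraic one. Once the quadratic $t^{2}-28t+61=0$ is identified, the remaining passage to the nested radicals in \eqref{d13} and \eqref{d14} is a routine algebraic simplification, including verifying that $(2\mp\sqrt3)(7\mp 3\sqrt5)/2$ are indeed the two roots of $h^{4}-(14+3\sqrt{15})h^{2}+1=0$ in $(0,1)$ and $(1,\infty)$ respectively.
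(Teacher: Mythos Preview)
Your proposal is correct and follows essentially the same route as the paper: specialize \eqref{d15} so that $X=1$, reduce to a palindromic polynomial in $h=b_{12,5}$, pick out the factor $h^{8}-28h^{6}+63h^{4}-28h^{2}+1$ (equivalently $t^{2}-28t+61=0$ with $t=h^{2}+h^{-2}$) by a numerical $q$-check, and solve with $0<h<1$. In fact your choice $s=1/3$ is the right one---the paper's printed value $s=1/6$ (and likewise $s=1/2$, $s=1/4$ in the two earlier corollaries) is a slip, since only $s=1/3$ makes $b_{s,5}b_{36s,5}=b_{1/3,5}b_{12,5}=1$ via Lemma~\eqref{d1}.
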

\begin{proof}
Putting $s=1/6,$ in \eqref{d15} and using the fact that $b_{1/3,5}b_{12,5}=1$, we deduce,
\begin{equation}\label{d16}\begin{split}
&\lb(h^8-28 h^6+63 h^4-28 h^2+1\rb)\lb(h^{12}+10 h^{10}+15 h^8+28 h^6+15 h^4+10 h^2+1\rb) \\&\lb(h^8-2 h^7+4 h^6-h^5+7 h^4+h^3+4 h^2+2 h+1\rb)\lb(h^8+2 h^7+4 h^6+h^5+7 h^4\rb.\\&\lb.-h^3+4 h^2-2 h+1\rb)=0,
\end{split}\end{equation}
where $h:=b_{12,5}.$\\
We observe that the first factor of  \eqref{d16} vanishes for specific value of $q:=e^{-\pi\sqrt{12/5}}$, whereas the other factors does not vanish. Hence, we have
\begin{equation}\label{d17}
t^2-28t+61=0,
\end{equation}
where $t:=h^2+\displaystyle\frac{1}{h^2}.$\\
On solving the equation \eqref{d17} for $h$ and $t>0$, we deduce
\begin{equation}\label{d18}
h^2+\displaystyle\frac{1}{h^2}=14+3\sqrt{15}.
\end{equation}
On solving the equation \eqref{d18} for $h$ and $0<h<1$, we arrive at \eqref{d13} and \eqref{d14}.
\end{proof}


\begin{thebibliography}{}
\bibitem{CA3}
C. Adiga, Taekyun Kim,  M. S. Mahadeva Naika and H. S. Madhusudhan, On Ramanujan's cubic continued fraction and explicit evaluations of theta-functions, Indian J. pure appl. math., 35 (9), (2004), 1047--1062.
\bibitem{NDB}
N. D. Baruah and N. Saikia, Two parameters for Ramanujan's theta-functions and their explicit values, Rocky Mountain J. Math., 37 (6), (2007), 1747--1790.
\bibitem{BCB1}
 B. C. Berndt, Ramanujan's Notebooks, Part III, Springer-Verlag, New York, 1991.
\bibitem{BCB2}
 B. C. Berndt, Ramanujan's Notebooks, Part IV, Springer-Verlag, New York, 1994.
\bibitem{BCBLCZ1}
 B. C. Berndt and L. -C. Zhang, Ramanujan's identities for eta-functions, Math. Ann.,  292 (1), (1992), 561-573.
  \bibitem{BCBLCZ2}
 B. C. Berndt and L. -C. Zhang, A new class of theta-function identities originating in Ramanujan's notebooks, J. number theory, 48 (2), (1994), 224-242.
 \bibitem{BAM1}
 S. Bhargava, C. Adiga  and M. S. Mahadeva Naika, A new class of modular equations in Ramanujan's alternative theory of elliptic function of signature 4 and some new $P$--$Q$ eta--function identities, Indian J. Math., 45 (1), (2003), 23--39.
\bibitem{MSM5}
M. S. Mahadeva Naika, B. N. Dharmendra and S. Chandankumar, New modular relations for Ramanujan's parameter $\mu(q)$, Int. J. Pure Appl. Math., 74 (4), (2012), 413--435.
\bibitem{MSMMCMKSB}
 M. S. Mahadeva Naika, M. C. Maheshkumar and K. Sushan Bairy, On some remarkable product of theta-function, Aust. J. Math. Anal. Appl., 5 (1), (2008), Art. 13, 1--15.
\bibitem{MSMCKSHM}
M. S. Mahadeva Naika, S. Chandankumar and M. Harish, On some new P-Q mixed modular equations, Ann. Univ. Ferrara Sez. VII (N.S.), 59 (2), (2013), 353--374.
\bibitem{MSMCKSBH}
M. S. Mahadeva Naika, S. Chandankumar and B. Hemanthkumar, Modular relations for Ramanujan's remarkable product of theta functions, Adv. Stud. Contemp. Math., 23 (3), (2013), 431--449.
\bibitem{MSMCKSBH1}
M. S. Mahadeva Naika, S. Chandankumar and B. Hemanthkumar, On some new Modular relations for a remarkable product of theta--functions, Tbil. Math. J., 7 (1), (2014), 55--68.
\bibitem{SR2}
 S. Ramanujan, Notebooks (2 volumes), Tata Institute of Fundamental Research, Bombay, 1957.
\bibitem{SR1}
 S. Ramanujan, The lost notebook and other unpublished papers, Narosa, New Delhi, 1988.
\end{thebibliography}
\end{document}